    \newcommand{\dens} {\bar{\vspace{0.2in}{\includegraphics[width=0.09in]{dens}}\vspace{0.2in}}}
    \newcommand{\one}{\mathrm{1}}
      \newtheorem{thm}{Theorem}[section]
      \newcommand{\bthm}{\begin{thm}} \newcommand{\ethm}{\end{thm}}
      \newtheorem{prop}[thm]{Proposition}
      \newcommand{\bprp}{\begin{prop}} \newcommand{\eprp}{\end{prop}}
      \newtheorem{fact}[thm]{Fact}
      \newcommand{\bfct}{\begin{fact}} \newcommand{\efct}{\end{fact}}
      \newtheorem{prob}[thm]{Problem}
      \newcommand{\bprb}{\begin{prob}} \newcommand{\eprb}{\end{prob}}
      \newtheorem{lem}[thm]{Lemma}
      \newcommand{\blem}{\begin{lem}} \newcommand{\elem}{\end{lem}}
      \newtheorem{claim}[thm]{Claim}
      \newcommand{\bclm}{\begin{claim}} \newcommand{\eclm}{\end{claim}}
      \newtheorem{cor}[thm]{Corollary}
      \newcommand{\bcor}{\begin{cor}} \newcommand{\ecor}{\end{cor}}
      \newtheorem{conj}[thm]{Conjecture}
      \newcommand{\bcnj}{\begin{conj}} \newcommand{\ecnj}{\end{conj}}
      \theoremstyle{definition}
      \newtheorem{defn}[thm]{Definition}
      \newcommand{\bdfn}{\begin{defn}} \newcommand{\edfn}{\end{defn}}
      \theoremstyle{remark}
      \newtheorem{rem}[thm]{Remark}
      \newcommand{\brem}{\begin{rem}} \newcommand{\erem}{\end{rem}}
      \newtheorem{cnv}[thm]{Convention}
      \newcommand{\bcnv}{\begin{cnv}} \newcommand{\ecnv}{\end{cnv}}
      \newtheorem{exam}[thm]{Example}
      \newcommand{\bexm}{\begin{exam}} \newcommand{\eexm}{\end{exam}}
      \newcommand{\bpf}{\begin{proof}} \newcommand{\epf}{\end{proof}}
      \newtheorem{exer}[thm]{Exercise}
      \newcommand{\bexer}{\begin{exer}} \newcommand{\eexer}{\end{exer}}
      \newcommand{\be}{\begin{enumerate}}
      \newcommand{\ee}{\end{enumerate}}
      \newcommand{\bi}{\begin{itemize}}
      \newcommand{\ei}{\end{itemize}}
      \newtheorem{obs}[thm]{Observation}
        \newcommand{\Z}{{\mathbb Z}}
        \newcommand{\N}{{\mathbb N}}
        \newcommand{\E}{\mathbb{E}}
        \newcommand{\PP}{\mathbb{P}}
        \newcommand{\Om}{\Omega}
        \newcommand{\quenchedP}{P}
        \newcommand{\annealedP}{\mathbb P}
       \newcommand{\ignore}[1]{}
       \newcommand{\Unif}{\mathcal {U}}
\definecolor{Red}{rgb}{1,0,0}
\definecolor{Blue}{rgb}{0,0,1}
\definecolor{Olive}{rgb}{0.41,0.55,0.13}
\definecolor{Yarok}{rgb}{0,0.5,0}
\definecolor{Green}{rgb}{0,1,0}
\definecolor{MGreen}{rgb}{0,0.8,0}
\definecolor{DGreen}{rgb}{0,0.55,0}
\definecolor{Yellow}{rgb}{1,1,0}
\definecolor{Cyan}{rgb}{0,1,1}
\definecolor{Magenta}{rgb}{1,0,1}
\definecolor{Orange}{rgb}{1,.5,0}
\definecolor{Violet}{rgb}{.5,0,.5}
\definecolor{Purple}{rgb}{.75,0,.25}
\definecolor{Brown}{rgb}{.75,.5,.25}
\definecolor{Grey}{rgb}{.5,.5,.5}
    \newcommand{\cookieenv}{\omega}
    \author{Gideon Amir\footnote{Gideon Amir, Bar Ilan University {\tt gidi.amir@gmail.com }}, Noam Berger\footnote{Noam Berger, The Hebrew University of Jerusalem and Technische Universit\"at M\"unchen.
{\tt berger@math.huji.ac.il}}, Tal Orenshtein\footnote{Tal Orenshtein, Weizmann Institute of Science and Technische Universit\"at M\"unchen. {\tt  tal.orenshtein@weizmann.ac.il}} }
    \date{}
    \title{Zero-one law for directional transience of one dimensional excited random walks}
\begin{document}

    \maketitle

    \begin{abstract}
    The probability that a one dimensional excited random walk in stationary ergodic and elliptic cookie environment is transient to the right (left) is either zero or one. This solves a problem posed by Kosygina and Zerner \cite{kosygina2012excited}. As an application, a law of large numbers holds in these conditions.
\\

\centerline{\bf{R\`{e}sum\`{e}}}
     La probabilit\`{e} q'une marche al\`{e}atoire unidimensionnelle excite dans un environnement ergodique et elliptique soit transiente a gauche ou a droite est soit
    nulle soit un. Ceci r\`{e}sout un probl\`{e}me pose par Kosygina et Zerner. Comme application,
    une loi des grands nombres est valable dans de telles conditions.
    \end{abstract}
\hfil

\thanks{\textit{2000 Mathematics Subject Classification.} 60K35, 60K37.}

\thanks{\textit{Key words:}\quad
excited random walk, cookie walk, recurrence, directional transience, zero-one law, law of large numbers, limit theorem, random environment.}

 \section{Introduction\label{sec:intro} }
    Excited random walk was introduced by Itai~Benjamini and David~B.~Wilson in 2003 \cite{benjamini2003excited}. Later
     the model was generalized by Martin~P.~W.~Zerner \cite{zerner2005multi}. It was studied extensively in recent years by numerous researchers, and an almost up to date account may be found in the recent survey of Kosygina and Zerner \cite{kosygina2012excited}.

The generalized model due to Zerner is informally known as Cookie Walk, recently the term `Brownie Motion' has been gaining popularity among researchers in the field.

The model is defined as follows:
Let $\Omega=[0,1]^{\Z\times\N}$, and endow this space with the standard $\sigma$-algebra (namely the product of Borel $\sigma$-algebras on the intervals). Let $\mu$ be a probability measure on $\Omega$ which is invariant and ergodic with respect to the $\Z$-shift (but not necessarily the $\N$-shift). We call $\mu$ the cookie distribution. We call each $\cookieenv\in\Omega$ a cookie environment. Notationally, $\cookieenv(x,n)\in[0,1]$ is called the $n$-th cookie in the location $x$.

We say that the distribution $\mu$ is {\em elliptic} if $\mu((0,1)^{\Z\times\N})=1$, 
and {\em uniformly elliptic} if there exists $\epsilon>0$ such that $\mu([\epsilon,1-\epsilon]^{\Z\times\N})=1$.

Given a cookie environment $\cookieenv$ and an initial position $x\in\Z$ we define the excited random walk driven by $\cookieenv$:
\begin{eqnarray*}
\quenchedP_{\cookieenv,x}(X_0=x)=1, \\
\quenchedP_{\cookieenv,x}(X_n=X_{n-1}+1\ |\ X_0,X_1,\ldots,X_{n-1}) &=& \cookieenv(X_{n-1},\#\{k\leq n-1: X_k=X_{n-1}\}), \\
\quenchedP_{\cookieenv,x}(X_n=X_{n-1}-1\ |\ X_0,X_1,\ldots,X_{n-1}) &=& 1 - \quenchedP_{\cookieenv,x}(X_n=X_{n-1}+1\ |\ X_0,X_1,\ldots,X_{n-1}).
\end{eqnarray*}
We associate $\mu$ with the annealed, or averaged, distribution defined by
\[
\annealedP_x (\cdot)= \int_\Omega\quenchedP_{\cookieenv,x} (\cdot) d\mu(\cookieenv).
\]

In this paper we are interested in the probability that the random walk is {\em transient to the right}, i.e. that
$\lim_{n\to\infty}X_n=+\infty$. We use $A^+$ to denote the event that the random walk is transient to the right, and $A^-$ to denote the event of transience to the left (i.e. $\lim_{n\to\infty}X_n=-\infty$).

    In their recent survey Kosygina and Zerner raised a version of the following problem (see Problem 3.5 of \cite{kosygina2012excited}):

    \begin{prob}\label{prob:KZ}
    Find conditions on the distribution $\mu$ which imply a zero-one law for a directional transience of one dimensional excited random walk, i.e. conditions which imply that $\annealedP_0(A^+)\in\{0,1\}$.
    \end{prob}

The main result of this paper answers Problem \ref{prob:KZ}, namely:

    \begin{thm}\label{thm:directionalZeroOneLaw}
    Let $\mu$ be a stationary ergodic (with respect to the $\Z$-shift) and elliptic probability measure on the space $\Omega$ of cookie environments. Then $\annealedP_0(A^+)\in\{0,1\}$.
    \end{thm}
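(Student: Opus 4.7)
I argue by contradiction, supposing $p := \PP_0(A^+) \in (0,1)$. The $\Z$-shift invariance of $\mu$ immediately gives $\PP_n(A^+) = p$ for every $n \in \Z$, via $\PP_{\omega, n}(A^+) = \PP_{\theta^n\omega, 0}(A^+) =: g(\theta^n\omega)$; Birkhoff's ergodic theorem then yields $\frac{1}{N}\sum_{n=1}^{N} g(\theta^n\omega) \to p$ for $\mu$-a.e.\ $\omega$, so in particular $\{g > 0\}$ has positive $\mu$-measure.

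The structural input comes from first-passage times. Set $T_n := \inf\{k : X_k = n\}$. On $A^+$, $T_n < \infty$ for every $n \geq 0$, and for all sufficiently large $n$ the walk never dips below $n$ after $T_n$. Since the cookies at sites $\geq n$ are untouched at time $T_n$, the Markov property of the walk-with-visit-counts yields the quenched factorisation
\[
\PP_{\omega, 0}\bigl(T_n < \infty,\ X_m \geq n\ \forall m \geq T_n\bigr) = \PP_{\omega, 0}(T_n < \infty)\cdot \alpha(\theta^n\omega),
\]
where $\alpha(\omega) := \PP_{\omega, 0}(X_m \geq 0\ \forall m)$. The two factors depend on disjoint halves $\omega|_{<n}$ and $\omega|_{\geq n}$ of the environment. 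In the i.i.d.\ case of Kosygina--Zerner the two are therefore independent, and averaging over $\mu$ yields an algebraic identity for $p$ that, combined with ellipticity, forces $p \in \{0,1\}$.

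Under mere stationary ergodicity --- the novelty of the present theorem --- the two factors are only correlated, not independent, so one must replace the product-of-expectations argument by an ergodic-averaging substitute. The plan is to Birkhoff-average the quenched factorisation over $n$ and combine with the ergodic average of $g$; this should yield an asymptotic identity relating $p$, $\E_\mu[\alpha]$, and the shift-invariant hitting probability $\PP_0(\sup_k X_k = +\infty)$. Ellipticity then provides local lower bounds (via specific finite excursions of the walk, e.g.\ forcing $X_1 = -1$ followed by a prescribed return to $+1$) that would be incompatible with intermediate values of $p$. The principal obstacle is keeping this algebra precise while handling the fact that the events $\{T_n < \infty,\ X_m \geq n\ \forall m \geq T_n\}$ used in the factorisation are not monotone in $n$ --- so $\PP_0(A^+)$ is not simply the limit of their probabilities --- which means passage to the limit is delicate and requires a careful choice of which shift-invariant quantity to ergodically average against.
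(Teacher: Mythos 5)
Your proposal is a plan rather than a proof, and the plan stops exactly where the real difficulty begins. The quenched factorisation at $T_n$ is correct (before $T_n$ the walk never visits $[n,\infty)$, so the cookies there are intact), but you never state what the ``asymptotic identity'' obtained by Birkhoff-averaging it actually is, nor why ellipticity would make it incompatible with $p\in(0,1)$; you explicitly flag the non-monotonicity obstacle and leave it unresolved. More fundamentally, the proposal never engages with the actual content of a \emph{directional} zero-one law. By the Kalikow-type theorem of Kosygina and Zerner, $\PP_0(A^+\cup A^-)\in\{0,1\}$ is already known, so the whole theorem reduces to showing that $\PP_0(A^+)>0$ and $\PP_0(A^-)>0$ cannot hold simultaneously --- that positive probability of never hitting $-1$ excludes positive probability of never hitting $+1$. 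Your factorisation only ever looks to the right of the walk and produces quantities ($\alpha$, $g$) attached to $A^+$ alone; no identity among them can see the event $A^-$, so the argument cannot close. (Your parenthetical claim that the i.i.d.\ case follows from ``an algebraic identity'' is unsubstantiated for the same reason: even there, the hard step is ruling out coexistence of the two transience events.)

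The paper resolves precisely this point with a mechanism absent from your proposal: it encodes the walk in an arrow environment, replaces the two hitting events by survival of two deterministic processes $Z^+$ (read rightward from $0$) and $Z^-$ (read leftward) on the \emph{same} arrows, and proves a pointwise subduality inequality (Lemma \ref{lem:subduality}): if $Z^+$ started at $x$ is at least $y$ at level $l$, then $Z^-$ on the shifted environment started at $y$ is at most $x$ at level $l$. Combined with a zero-upper-density estimate for the set of levels where $Z^+$ is small --- where ellipticity enters through a martingale argument, not through forcing prescribed finite excursions --- this shows that survival of $Z^+$ forces $Z^-$ to return to a bounded window infinitely often and hence to die out. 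Some such coupling between the rightward and leftward problems is indispensable, and it is the missing idea in your outline.
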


\subsection{Law of Large Numbers}
Kosygina and Zerner proved in \cite[Theorem 4.1]{kosygina2012excited} that for a stationary ergodic and elliptic probability measure over cookie environments, if a directional 0-1 law holds then a law of large numbers holds.
Using Theorem \ref{thm:directionalZeroOneLaw} an immediate corollary is the following law of large numbers.

\begin{thm}\label{thm:LLN} Let $\mu$ be a stationary ergodic (with respect to the $\Z$-shift $\theta$) and elliptic probability measure on the space $\Omega$ of cookie environments. Then $\annealedP_0(\lim_{n\to\infty}\frac{X_n}{n}=v)=1$ for some deterministic $v\in[-1,1]$.
\end{thm}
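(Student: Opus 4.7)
The plan is to deduce Theorem \ref{thm:LLN} from Theorem \ref{thm:directionalZeroOneLaw} via the reduction of Kosygina and Zerner (Theorem 4.1 of \cite{kosygina2012excited}), which is cited just above the statement. I sketch the strategy.

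First, apply Theorem \ref{thm:directionalZeroOneLaw} to both $A^+$ and to its mirror $A^-$ (obtained by reflecting the cookie environment, which preserves stationarity and ellipticity). This produces three mutually exclusive regimes: (i) $\annealedP_0(A^+)=1$, (ii) $\annealedP_0(A^-)=1$, or (iii) $\annealedP_0(A^+)=\annealedP_0(A^-)=0$. In regime (iii), Zerner's recurrence dichotomy for one-dimensional cookie walks forces $\limsup X_n=+\infty$ and $\liminf X_n=-\infty$ almost surely; since the walk is nearest-neighbour and visits every integer infinitely often, an elementary sandwiching argument gives $X_n/n\to 0$, so $v=0$.

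In regime (i) (regime (ii) is symmetric), define hitting times $T_k=\inf\{n:X_n=k\}$. Transience to the right together with ellipticity yields $T_k<\infty$ almost surely for every $k\ge 1$. The key step is to show that $T_k/k$ converges almost surely to some deterministic limit $\lambda\in[1,\infty]$; then $v:=1/\lambda\in[0,1]$ and a standard interpolation between the $T_k$ gives $X_n/n\to v$. To obtain the convergence of $T_k/k$, consider the increments $\tau_k=T_k-T_{k-1}$ jointly with the cookie environment seen from the current rightmost site at time $T_k$ (i.e., $\theta^k\cookieenv$ with the cookies already consumed at positions $\le k-1$ removed). Under the hypotheses on $\mu$, this enriched process is stationary and ergodic with respect to the natural shift, and Birkhoff's ergodic theorem delivers the claim.

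The main obstacle is that $\tau_k$ is \emph{not} a function of the environment to the right of $k$ alone: excursions of the walk can return to previously visited sites and disturb the cookie stacks there, so one cannot simply apply the ergodic theorem to the $\Z$-shifted environment. The resolution, carried out in \cite[Section 4]{kosygina2012excited}, is to enlarge the state space to record how many cookies at each site have already been eaten, and to verify that the resulting environment-as-seen-from-the-rightmost-site is a stationary ergodic sequence. Once the zero-one law of Theorem \ref{thm:directionalZeroOneLaw} is available, the entire reduction from \cite{kosygina2012excited} applies verbatim and Theorem \ref{thm:LLN} follows.
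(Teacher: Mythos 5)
Your proposal is correct and takes essentially the same route as the paper: the paper's entire proof is to combine Theorem \ref{thm:directionalZeroOneLaw} with Theorem 4.1 of \cite{kosygina2012excited}, exactly as you do, with your additional paragraphs merely sketching the internals of that cited reduction. (One small caution in your sketch of regime (iii): under ellipticity alone the range may be a.s.\ finite, so the walk need not visit every integer infinitely often, though $X_n/n\to 0$ still holds there.)
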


One can write a different, more direct, proof of Theorem~\ref{thm:LLN} by noticing that the stationarity assumption in the proof of Theorem~4.1 in \cite{kosygina2012excited} can be slightly weakened.
We refer the reader to Theorem~4.3 in \cite{Amir2013excited} and the discussion above it for details.

\subsection{Previous work}
In some examples in the literature, special cases of Theorem \ref{thm:directionalZeroOneLaw} are derived as special cases of stronger characterization theorems.

Benjamini and Wilson \cite{benjamini2003excited} showed that whenever a cookie environment $\cookieenv$ satisfies $\omega(x,1)=p$ for all $x\in\Z$ and $\cookieenv(x,i)=\frac{1}{2}$ for all $x\in\Z$ and $i\ge 2$, then the walk is $\PP_\cookieenv$-a.s.\ recurrent for all $p\in (0,1)$.
Zerner \cite{zerner2005multi} showed that if the measure $\mu$ is stationary ergodic and satisfies $\mu([\frac{1}{2},1]^{\Z\times\N})=1$, then it is transient to the right if and only if either $\mu(\cookieenv(0,1)=1)=1$ or $\delta > 1$, where $\delta=\E_\mu(\sum_{i=1}^\infty(2\cookieenv(0,i)-1))$.
Kosygina and Zerner \cite{kosygina2008positively} showed that whenever the measure $\mu$ is i.i.d.\ (that is, the sequence of columns $\cookieenv(x,\cdot)$, $x\in\Z$, is i.i.d.\ under $\mu$), weakly elliptic (that is, $\mu( \prod_{i=1}^M\cookieenv(0,i) ) > 0$ and $\mu( \prod_{i=1}^M (1-\cookieenv(0,i) ) )>0$, and there exists a deterministic number $M$ so that $\mu(\cookieenv(0,i)=\frac{1}{2}))=1$ for all $i>M$ then the walk is transient to the right if and only if $\delta > 1$, and transient to the left if and only if $\delta<-1$.
Kosygina and Zerner \cite{kosygina2012excited} proved a Kalikow-type 0-1 law, i.e. a 0-1 law for (non directional) transience for stationary ergodic and elliptic measure over cookie environments, see Theorem~\ref{thm:KalikowType}.

\subsection{Structure of the paper}
The paper is structured as follows: In Section \ref{sec:preliminaries} we present some concepts and processes that take part in the proof. In section \ref{sec:arrow} we introduce arrow environments, in section \ref{sec:zpluszminus} we introduce two associated processes $Z^+$ and $Z^-$, and in section \ref{sec:survival vs. hitting} we study some of their connections to cookie random walks. In section  \ref{sec:comoz} we study monotonicity and symmetry properties of $Z^+$ and $Z^-$, and present an easy lemma which is, however, the core of our argument. In Section \ref{sec:0-1 Law for Directional Transience}
 we reprove a theorem by Kosygina and Zerner. We do this for two purposes. The first purpose is to keep the paper self-contained, and the second is to enable us to easily use notations and lemmas from their proof in the proof of our main result.
In Section \ref{sec:pfmain} we prove Theorem \ref{thm:directionalZeroOneLaw}.

    \section{Preliminaries}\label{sec:preliminaries}

  In this section we give some basic definitions and lemmas which are necessary for the proof of Theorem \ref{thm:directionalZeroOneLaw}.

  \subsection{Arrow environments}\label{sec:arrow}

  Let $\cookieenv$ be a cookie environment. We can realize $\cookieenv$ into a list of arrows, or instructions, which tell the walker where to walk to in every step of the process. More precisely, let $U=[0,1]^{\Z\times\N}$, and let $F:\Omega\times U\to\{0,1\}^{\Z\times\N}$ be defined as
  $F(\cookieenv,u)(x,n)={\bf 1}_{u(x,n)<\cookieenv(x,n)}$. An element $a\in \{0,1\}^{\Z\times\N}$ is called an \emph{arrow environment}.

  We now endow $U$ with the standard Borel $\sigma$-algebra $\mathcal{B}_U$, and with the product measure $\PP_U$ of $\Unif[0,1]$ distributions. The following lemma is a standard Ergodic theoretic fact. For convenience, a proof sketch of this fact may be found in the Appendix.
  \begin{lem}\label{lem:arrow_ergod}
  Let $\mu$ be a probability measure on the space $\Omega$ of cookie environments which is stationary ergodic with respect to the $\Z$-shift $\theta$.
  Consider $\{0,1\}^{\Z\times\N}$, the space of arrow environments with the standard Borel $\sigma$-algebra, and let $\nu$ be the probability measure induced from $(\Omega\times U,\mu\times \PP_U)$ on $\{0,1\}^{\Z\times\N}$ by the function $F$. Then $\nu$ is stationary ergodic with respect to the shift $\theta$.
   \end{lem}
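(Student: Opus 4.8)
The plan is to transfer ergodicity from the factor system $(\Omega\times U,\mu\times\PP_U,\theta\times\theta)$ through the factor map $F$. The key observations are: (i) $F$ intertwines the shifts, i.e. $F\circ(\theta\times\theta)=\theta\circ F$, which is immediate from the coordinatewise definition $F(\cookieenv,u)(x,n)=\one_{u(x,n)<\cookieenv(x,n)}$; and (ii) a factor (continuous or measurable image respecting the dynamics) of a stationary ergodic system is stationary ergodic. So the whole statement reduces to checking that $\mu\times\PP_U$ is itself stationary and ergodic for $\theta\times\theta$, and then invoking this general principle.

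\textbf{Step 1: Stationarity.} Since $\mu$ is $\theta$-invariant by hypothesis and $\PP_U$ is a product of i.i.d.\ $\Unif[0,1]$'s hence $\theta$-invariant, the product $\mu\times\PP_U$ is $(\theta\times\theta)$-invariant. Pushing forward by $F$ and using the intertwining relation $\theta\circ F=F\circ(\theta\times\theta)$ gives $\theta_*\nu=\theta_*F_*(\mu\times\PP_U)=F_*(\theta\times\theta)_*(\mu\times\PP_U)=F_*(\mu\times\PP_U)=\nu$, so $\nu$ is $\theta$-stationary.

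\textbf{Step 2: Ergodicity of the product.} This is the only genuine point. One cannot simply say ``product of ergodic systems is ergodic'' — that is false in general. What is true, and what I would use, is the standard fact that if $(\Omega,\mu,\theta)$ is ergodic and $(U,\PP_U,\theta)$ is \emph{weakly mixing} (indeed $\PP_U$ as a Bernoulli shift is mixing, hence weakly mixing), then the product $(\Omega\times U,\mu\times\PP_U,\theta\times\theta)$ is ergodic. Alternatively, and perhaps cleaner for a self-contained appendix, one can argue directly: let $g\in L^2(\nu)$ be $\theta$-invariant; pull it back to $h=g\circ F\in L^2(\mu\times\PP_U)$, which is $(\theta\times\theta)$-invariant. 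For $\mu$-a.e.\ $\cookieenv$ the section $h(\cookieenv,\cdot)$ is a function on $(U,\PP_U)$; using invariance and the mixing (tail-triviality / Kolmogorov $0$-$1$) of the Bernoulli system $\PP_U$ one shows $h(\cookieenv,\cdot)$ is $\PP_U$-a.s.\ constant, equal to some $\tilde g(\cookieenv)$; then $\tilde g$ is a $\theta$-invariant function on $(\Omega,\mu)$, hence $\mu$-a.s.\ constant by ergodicity of $\mu$, so $h$ and therefore $g$ is constant. Thus $\nu$ is ergodic.

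\textbf{Step 3: Conclusion.} Combining Steps 1 and 2, $\mu\times\PP_U$ is stationary ergodic for $\theta\times\theta$; since $\nu=F_*(\mu\times\PP_U)$ with $F$ intertwining the shifts, any $\theta$-invariant event $B\subset\{0,1\}^{\Z\times\N}$ pulls back to a $(\theta\times\theta)$-invariant set $F^{-1}(B)$, which has measure $0$ or $1$, hence $\nu(B)=(\mu\times\PP_U)(F^{-1}(B))\in\{0,1\}$. This gives ergodicity of $\nu$, completing the proof. \textbf{The main obstacle} is Step 2: one must resist the temptation to assert that products of ergodic systems are ergodic (false — e.g.\ a rotation times itself), and instead genuinely use that the uniform-randomness factor $\PP_U$ is a Bernoulli shift, which is mixing; the cleanest route is the conditioning argument on $U$-sections combined with the Kolmogorov $0$-$1$ law for the i.i.d.\ coordinates of $u$.
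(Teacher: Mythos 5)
Your proposal is correct, and the overall architecture (intertwining of the shifts by $F$, reduction to ergodicity of $\mu\times\PP_U$, then push forward) matches the paper exactly; the only substantive difference is how Step 2 is discharged. The paper does not cite the ``ergodic $\times$ weakly mixing $\Rightarrow$ ergodic'' theorem. Instead it proves the needed ergodicity from scratch for this particular product: given a $(\theta\times\theta)$-invariant $f$, it sets $\varphi=E(f\mid\mathcal{B}_\Omega)$ (which is $\theta$-invariant, hence constant by ergodicity of $\mu$), approximates $f$ in $L^2$ by functions $f_n$ depending on only finitely many $u$-coordinates, and then compares $f_n$ with its $3n$-fold shift $\tilde f_n$. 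Because $\PP_U$ is a product measure, $f_n$ and $\tilde f_n$ are conditionally independent given $\mathcal{B}_\Omega$, so $E[f_n\tilde f_n\mid\mathcal{B}_\Omega]$ factors; passing to the limit yields $E[f^2]=E[f]^2$ and hence $\mathrm{var}(f)=0$. This is exactly a hands-on proof of the weak-mixing-against-ergodic principle for this product, so the two proofs are equivalent in substance; yours is shorter if one is willing to cite the standard theorem, while the paper's is self-contained (appropriate for an appendix advertised as a proof sketch).

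One caution about your alternative ``direct'' argument: the claim that for $\mu$-a.e.\ $\omega$ the section $h(\omega,\cdot)$ is tail-measurable (and hence a.s.\ constant by Kolmogorov $0$--$1$) does not follow immediately from $(\theta\times\theta)$-invariance, since $h(\omega,\cdot)$ is not itself shift-invariant in $u$ — the invariance relation couples the sections at $\omega$ and at $\theta\omega$. To make that route rigorous you would still need an approximation step of the kind the paper carries out: approximate $h$ by cylinder functions in $u$, use $h(\omega,u)=h(\theta^n\omega,\theta^n u)$ to show $h$ is $L^2$-close to $\mathcal{B}_\Omega\times\sigma(u(k,\cdot):k\ge n-m)$-measurable functions, conclude $h$ is $\mathcal{B}_\Omega\times\mathcal{T}_U$-measurable where $\mathcal{T}_U$ is the $u$-tail, and then apply the $0$--$1$ law. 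So the sketch is morally right but elides precisely the step that the paper's computation supplies.
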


The following fact, which is straightforward, lies behind the definition of arrow environments:
\begin{fact}Given a cookie environment $\omega$, the law of a (non-random) walk moving according to the (random) arrow environment sampled from $\omega$ is the same as the quenched law of the cookie random walk on $\omega$.
\end{fact}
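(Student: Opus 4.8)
The statement to be proven is the ``straightforward'' Fact asserting that pushing forward the quenched law of the cookie walk through the arrow-environment construction recovers the correct quenched dynamics. The plan is to fix a cookie environment $\cookieenv$, sample an arrow environment $a = F(\cookieenv, u)$ with $u \sim \PP_U$, and run the deterministic walk that, at each step, reads the arrow at its current location and visit-count: if the walk is at $x$ having visited $x$ exactly $k$ times so far (counting the present visit), it moves to $x+1$ when $a(x,k)=1$ and to $x-1$ when $a(x,k)=0$. Call the law of this walk $Q_{\cookieenv}$ (the annealing over $u$). The goal is to show $Q_{\cookieenv} = \quenchedP_{\cookieenv,x}$ as measures on path space $\Z^{\N}$.

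First I would set up the coupling carefully: the walk $(X_n)$ is a deterministic function of the pair $(X_0, a)$, and the key combinatorial observation is that along any fixed trajectory the walk consults each coordinate $(x,k)$ of the arrow environment \emph{at most once} --- namely at the unique time (if any) that it makes its $k$-th visit to $x$. Hence the collection of arrow-bits actually queried up to time $n$ is an i.i.d.\ uniform-thresholded family, and no bit is ever reused. This is exactly what makes the construction work. Concretely, I would prove by induction on $n$ that for every finite path $x = x_0, x_1, \ldots, x_{n-1}$ the conditional law of the next step agrees: conditionally on $X_0 = x_0, \ldots, X_{n-1} = x_{n-1}$, the event $\{X_n = x_{n-1}+1\}$ is, under the coupling, the event $\{u(x_{n-1}, j) < \cookieenv(x_{n-1}, j)\}$ where $j = \#\{k \le n-1 : x_k = x_{n-1}\}$; and since $u(x_{n-1}, j)$ is independent of all the arrow-bits consulted at steps $1, \ldots, n-1$ (those involve distinct coordinates $(x,k)$, by the at-most-once observation), it is $\Unif[0,1]$-distributed given the past, so this event has conditional probability $\cookieenv(x_{n-1}, j)$, matching the definition of $\quenchedP_{\cookieenv,x}$.

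The induction step needs two ingredients stated precisely: (i) the deterministic fact that the coordinate $(x_{n-1}, j)$ queried at step $n$ has not been queried at any earlier step of the same trajectory --- this follows because a coordinate $(y,k)$ is queried at step $m+1$ iff $x_m = y$ and $\#\{i \le m : x_i = y\} = k$, and for fixed $y$ the map $m \mapsto \#\{i\le m : x_i = y\}$ restricted to the times with $x_m = y$ is strictly increasing, so the pair determines $m$ uniquely; (ii) consequently, the $\sigma$-algebra generated by $X_0, \ldots, X_{n-1}$ is contained in the $\sigma$-algebra generated by $\{u(y,k) : (y,k) \in S_{n-1}\}$ for a (path-dependent but, on the event we condition on, fixed) set $S_{n-1}$ not containing $(x_{n-1}, j)$, whence $u(x_{n-1}, j)$ is independent of the past. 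Since the two laws agree on all one-step conditional probabilities and have the same initial distribution $\delta_x$, they agree on all cylinder sets and hence as measures.

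I expect the only real obstacle to be purely notational: making the ``each coordinate is queried at most once'' bookkeeping airtight, in particular handling the conditioning on a trajectory that the walk follows with positive probability versus a general measurable statement, and being careful that the set $S_{n-1}$ of consulted coordinates is itself determined by $X_0,\dots,X_{n-1}$. There is no analytic difficulty; it is a coupling-plus-induction argument, which is why the paper labels it straightforward and relegates it to a Fact.
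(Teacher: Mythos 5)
The paper states this Fact without proof, explicitly flagging it as ``straightforward,'' so there is no argument to compare against. Your proposal is correct and is the natural argument one would give: the deterministic walk on the arrow environment queries each coordinate $(x,k)$ at most once along any trajectory, so conditionally on the first $n$ steps the next queried uniform $u(x_{n-1},j)$ is independent of everything already observed, giving one-step transition probability $\cookieenv(x_{n-1},j)$; agreement on all cylinder sets then identifies the two laws.
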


  Arrow environments were considered by Holmes and Salisbury in \cite{holmes2012combinatorial}. They used this construction to couple ERW on different cookie environments and deduced monotonicity results.

In this paper we consider arrow environments as a natural way to couple different processes on the same cookie environment.
The use of arrow environments gives a direct approach to distilling the ``combinatorial" part from many of the probabilistic arguments appearing in the ERW literature. See e.g. section \ref{sec:survival vs. hitting}.


\subsection{The processes $Z^+$ and $Z^-$}\label{sec:zpluszminus}

To avoid degenerate cases we introduce the following definition.
\begin{defn}
We say that sequence of arrows $b\in\{0,1\}^\N$ is \emph{non-degenerate} if there are infinitely many $i\ge 1$ for which $b(i)\neq b(i+1)$. An arrow environment $a$ is called \emph{non-degenerate} if $a(x,\cdot)$ is non-degenerate for all $x\in\Z$. The subspace of all non-degenerate arrow environments is denoted by $\mathbf{A}\subset\{0,1\}^{\Z\times\N}$
\end{defn}

Let $a\in \mathbf{A}$ be a non-degenerate arrow environment and let $y\geq 1$. We define the processes $Z^+$ and $Z^-$ (the initial value $y$ and the sequence $a$ is suppressed in the notation) as follows:
$Z^+_0=y$. Then, for every $n>0$, we define $Z^+_n$ to be the number of $1$-s until the $Z^+_{n-1}$-th zero in $a(n-1,\cdot)$. More precisely, if
\[
\Theta_n=\inf\left\{j:\sum_{i=0}^j\big[1-a(n-1,i)\big]=Z^+_{n-1}\right\},
\]
then we take $Z^+_n=\Theta_n-Z^+_{n-1}$.

We define $Z^-$ completely analogously, by replacing the roles of 0 and 1, and considering $a$ on the left half line rather than the right half line: $Z^-_0=y$, and $Z^-_n$ is the number of zeros until the $Z^-_{n-1}$-th one in $a(1-n,\cdot)$.

For ease of notation, we define for every non-degenerate $b\in\{0,1\}^\N$ the functions $U^+_b,U^-_b:\N\to\N\cup\{\infty\}$ by $U^+_b(0)=0$,
\begin{equation}\label{eq:talorenshtein}
U^+_b(x)=\inf\left\{j:\sum_{i=0}^j\big[1-b(i)\big]=x\right\}-x,
\end{equation}
and, defining $b^c$ by $b^c(x)=1-b(x)$,
\[
U^-_b(x)=U^+_{b^c}(x).
\]
Using this notation, we may simply write $Z^+_n=U^+_{a(n-1,\cdot)}(Z^+_{n-1})$ and $Z^-_n=U^-_{a(1-n,\cdot)}(Z^-_{n-1})$.

The definition given here for $Z^+$ and $Z^-$ appeared first in \cite{Amir2013excited}, where the authors of that paper considered the case of any given number of walkers on the same cookie environment and used a natural generalization of the above process.
    A slightly different version of the chains $Z^+$ and $Z^-$ was introduced and linked to one dimensional ERW by Kosygina and Zerner in 2008 \cite{kosygina2008positively}
        in the context of {\em bounded environments}, i.e.\ environments for which there is a deterministic $M$ so that $\cookieenv(x,i)=\frac{1}{2}$ for all $i>M$ and all $x\in\Z$. In such environments $Z^+$ and $Z^-$ may be viewed as a certain type of a branching process with migration. The connection of random walks to branching process with migration is traced back at least to Kesten, Kozlov, and Spitzer \cite{kesten1975limit} from 1975. An adaptation of their method to ERW was first made by Basdevant and Singh \cite{basdevant2008speed} in 2008. Using this connection, much can be said about the ERW, see e.g.  Kosygina and Zerner \cite{kosygina2008positively} and \cite{kosygina2012excited} (transience versus recurrence, ballisticity, CLT), Basdevant and Singh \cite{basdevant2008speed} and \cite{basdevant2008rate} (ballisticity and asymptotic rate of diffusivity), Peterson \cite{peterson2012large} and \cite{peterson2012strict} (law of large deviation, slow-down phenomenon, and strict monotonicity results), Rastegar and Roitershtein \cite{rastegar2011maximum} (maximum occupation time) and Dolgopyat and Kosygina \cite{dolgopyat2011central} and Kosygina and Mountford \cite{kosygina2011limit} (limit laws).

\subsection{Survival of $Z^+$ and the hitting time $T_{-1}$}\label{sec:survival vs. hitting}
    The aim of this section is to show that strict positivity of the process $Z^+$ is equivalent to the event that an excited walker on the given arrow environment never hits $-1$. This equivalence is shown to hold also for several walkers walking on the same arrow environment in \cite{Amir2013excited}.

Removing the probabilistic interpretation from the arguments of Kosygina and Zerner in Section 3 of \cite{kosygina2008positively}, a pure combinatorial condition for right-transience is obtained. Fix an arrow environment $a\in A$
and for every $m\in\Z$ set
\[
T_{m}:=\inf\{t\ge 0:X_t=m\}
\]\label{eq:defOfTminusOne}
to be the hitting time of $m$ by the excited walk $X$ on the arrow environment $a$.
Define
\[ W_0=1; \text{ } W_n=\# \{t<T_{-1}: X_t=n-1 \text{ and } X_{t+1}= n\},\text{   } n\ge 1\]
to be the total number of crossings of the edge $\{n-1,n\}$ by $X$ before hitting $-1$.

Notice that as $a$ is assumed to be in $A$, if $T_{-1}=\infty$ then $\lim_{t\to\infty} X_t = +\infty$. In particular, for each edge with non negative endpoints, the difference between the total number of its right crossings and left crossings by the walk is exactly 1. In the case where $T_{-1}<\infty$ we have an equality. Let us sum up this fact in the following remark.

\begin{rem}\label{rem:ComparingCrossings} The following hold for all $ n\ge 0$.
\begin{enumerate}
\item \label{c:1} If $T_{-1}=\infty$ then $W_n=1\ + \text{ total number of crossings of }\{n,n-1\}\text{ by } X.$  \label{eq:ComparingCrossingsTranCase}
\item \label{c:2} If $\ T_{-1}<\infty$ then $W_n = \text{ the total number of crossings of } \{n,n-1\} \text{ by } X$ before time $T_{-1}$.
\end{enumerate}
\end{rem}

\begin{lem}\label{lem:comparingZandW} For all $n\ge 0$, the following hold.
\begin{itemize}
\item if $T_{-1}<\infty$ then $Z^+_n =W_n$ \label{ZEqualsRightCrossings}
\item if $T_{-1}=\infty$ then $Z^+_n \geq W_n$ \label{ZisGraterThanRightCrossings}
\end{itemize}
Where $Z^+_n$ is defined in Section 2.2 with initial value $Z^+_0=1$.
\end{lem}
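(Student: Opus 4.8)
The plan is to prove the identity $Z^+_n = W_n$ (resp.\ the inequality $Z^+_n \ge W_n$) by induction on $n$, tracking the meaning of both sides. For the base case $n=0$ both quantities equal $1$ by definition. For the inductive step, the key observation is that $W_{n+1}$ counts right-crossings of the edge $\{n,n+1\}$, and each such right-crossing is immediately preceded by a visit to $n$ from which the walker steps right; between consecutive right-crossings of $\{n,n+1\}$ the walker must first return to $n$ (from the left, i.e.\ via a left-crossing of $\{n,n+1\}$), and by Remark~\ref{rem:ComparingCrossings} the number of left-crossings of $\{n,n+1\}$ equals $W_{n+1}$ in the case $T_{-1}<\infty$ and $W_{n+1}-1$ in the case $T_{-1}=\infty$. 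So the visits to $n$ before $T_{-1}$ come in two types: those from which the walker steps right (there are $W_{n+1}$ of these, one per right-crossing of $\{n,n+1\}$) and those from which it steps left (there are $W_n$ of these, one per right-crossing of $\{n-1,n\}$, using Remark~\ref{rem:ComparingCrossings} again for the edge $\{n-1,n\}$, accounting for the $+1$ in the transient case as the walker's final escape to $+\infty$).

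Now I relate this to the arrow environment at site $n$. The $k$-th visit of $X$ to $n$ consults the $k$-th cookie/arrow $a(n,k-1)$: the walker steps right if $a(n,k-1)=1$ and left if $a(n,k-1)=0$. Suppose first $T_{-1}<\infty$. Then there are exactly $W_n$ visits to $n$ from which $X$ steps left, i.e.\ exactly $W_n$ zeros are consumed among $a(n,0),a(n,1),\dots$ before time $T_{-1}$, and the last visit to $n$ is the one realizing the $W_n$-th zero. The number of right-steps from $n$ up to and including that visit is precisely the number of $1$'s appearing before the $W_n$-th zero in $a(n,\cdot)$, which by definition of $U^+$ (equation~\eqref{eq:talorenshtein}) is $U^+_{a(n,\cdot)}(W_n)$. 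Hence $W_{n+1} = U^+_{a(n,\cdot)}(W_n)$. On the other hand, by the induction hypothesis $W_n = Z^+_n$, and by the recursive definition $Z^+_{n+1} = U^+_{a(n,\cdot)}(Z^+_n) = U^+_{a(n,\cdot)}(W_n)$. Therefore $Z^+_{n+1} = W_{n+1}$, completing the induction in this case. (Non-degeneracy of $a$ guarantees $U^+_{a(n,\cdot)}$ is finite-valued, so there is no issue with $\infty$.)

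In the case $T_{-1}=\infty$: by Remark~\ref{rem:ComparingCrossings}\eqref{c:1}, $X$ visits every non-negative site infinitely often, so in particular it visits $n$ infinitely often and consumes infinitely many zeros from $a(n,\cdot)$; thus the $W_n$-th zero is reached, but the walker may make further left-steps from $n$ afterward (indeed infinitely many), so the number of right-steps from $n$ before the $W_n$-th zero is only a lower bound obtained by stopping the count early — it equals $U^+_{a(n,\cdot)}(W_n)$, while the actual count of right-crossings of $\{n,n+1\}$, namely $W_{n+1}$, could be smaller because $W_{n+1}$ counts right-crossings before $T_{-1}=\infty$, i.e.\ all of them, hence... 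I should be careful: when $T_{-1}=\infty$, $W_{n+1}$ counts \emph{all} right-crossings of $\{n,n+1\}$, so it should be $\ge U^+$, not $\le$. The correct bookkeeping is that the induction hypothesis gives $Z^+_n \ge W_n$, and since $U^+_b$ is non-decreasing in its argument, $Z^+_{n+1} = U^+_{a(n,\cdot)}(Z^+_n) \ge U^+_{a(n,\cdot)}(W_n)$; and $U^+_{a(n,\cdot)}(W_n)$ is the number of right-steps from $n$ occurring before the $W_n$-th left-step from $n$, which is at least the number of right-steps from $n$ occurring before the visit to $n$ that produces $X$'s final escape past $\{n-1,n\}$ — and that latter count is exactly $W_{n+1}$ by Remark~\ref{rem:ComparingCrossings}\eqref{c:1} applied to the edge $\{n,n+1\}$. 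Hence $Z^+_{n+1} \ge W_{n+1}$.

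The main obstacle is the careful combinatorial bookkeeping of the second case: matching up visits to site $n$, the zeros and ones consumed in $a(n,\cdot)$, and the crossing counts of the two adjacent edges $\{n-1,n\}$ and $\{n,n+1\}$, while correctly handling the $+1$ discrepancy coming from the walker's escape to $+\infty$ (Remark~\ref{rem:ComparingCrossings}). Once the monotonicity of $U^+_b$ and the precise "visits to $n$ split according to the arrow read" picture are in place, both cases follow from the inductive definition of $Z^+$.
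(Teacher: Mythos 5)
Your proof is correct and follows essentially the same route as the paper: induction on $n$, translation of $W_n$ and $W_{n+1}$ into counts of zeros and ones read at site $n$ via Remark~\ref{rem:ComparingCrossings}, and (in the transient case) the monotonicity of $U^+_b$ together with the induction hypothesis $Z^+_n\ge W_n$. One small imprecision worth fixing in the write-up: in the case $T_{-1}=\infty$ there are only $W_n-1$ actual left-steps from $n$ (the walk eventually leaves $n$ rightward for good), so the phrase ``the $W_n$-th left-step from $n$'' should be ``the $W_n$-th zero in $a(n,\cdot)$'' --- a position in the arrow sequence that is never actually consulted by the walk, lying beyond all $i$ visits to $n$; this is exactly what makes $U^+_{a(n,\cdot)}(W_n)\ge W_{n+1}$.
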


\begin{proof}
Assume first that $T_{-1}<\infty$. We will prove by induction on $n\ge 0$ that $Z^+_n = W_n$ for all $n\ge 0$. For $n=0$ we have $Z^+_0=1=W_0$ by definition. Assume now that $Z^+_n=W_n$. Since  $T_{-1}<\infty$, the last crossing of the undirected edge $\{n,n+1\}$ by $X$ before time $T_{-1}$ is a left crossing, and therefore $a(n,i)=0$, where $i$ is the total number of visits of $X$ to position $n$ before time $T_{-1}$. This implies that the number of $0$-s in $\{a(n,1),...,a(n,i)\}$ equals the total number of crossings of $\{n,n-1\}$ before time $T_{-1}$. By Remark~\ref{rem:ComparingCrossings} the last quantity equals $W_n$. Since by the induction hypothesis $Z^+_n=W_n$, We get that $Z^+_n$ is the number $0$-s in $\{a(n,1),...,a(n,i)\}$.
Now, $W_{n+1}$ is the number of ones in $\{a(n,1),...,a(n,i)\}$. The latter is exactly the number of $1$-s prior to $Z^+_n$ $0$-s in $a(n,\cdot)$, which is defined to be $Z_{n+1}^+$.

Consider now the case $T_{-1}=\infty$. Again, we will prove by induction on $n\ge 0$ that $Z^+_n \geq W_n$. For $n=0$ we have $Z_0=1=W_0$. Assume by induction that $Z^+_n\ge W_n$. Let $i$ be the total number of visits of $X$ to place $n$. Note that as $a\in A$ the process $X$ is transient and so $i<\infty$ and $a(n,i)=1$. By Lemma \ref{rem:ComparingCrossings} $W_n$ equals the number of $0$'s in $\{a(n,1),...,a(n,i)\}$ plus $1$. As in the first case by the induction hypothesis
$Z^+_n$ is greater than or equal to the number of $0$'s in $\{a(n,1),...,a(n,i)\}$ plus $1$. Now, $W_{n+1}$ is the number of ones in $\{a(n,1),...,a(n,i)\}$. The latter is exactly the number of $1$-s prior to the $(Z_n^+ -1)$-st zero in $a(n,\cdot)$, and this number is less than or equal to $Z^+_{n+1}$.
\end{proof}

As a result, we get the following theorem.
\begin{thm}\label{thm:SurvivalEquivNonReturning}
$T_{-1}<\infty$ if and only if $Z^+_n=0$ for some $n$, and $T_{1}<\infty$ if and only if $Z^-_n=0$ for some $n$.
\end{thm}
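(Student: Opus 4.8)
The plan is to deduce Theorem~\ref{thm:SurvivalEquivNonReturning} from Lemma~\ref{lem:comparingZandW} together with Remark~\ref{rem:ComparingCrossings}. I will prove the first equivalence; the second follows by the symmetric argument, reflecting the line $x \mapsto -x$ and swapping the roles of $0$ and $1$, which exactly exchanges $Z^+$ with $Z^-$ and $T_{-1}$ with $T_1$.

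First I would handle the easy direction. Suppose $T_{-1} < \infty$. Since the walk reaches $-1$, it in particular reaches $-1$ from $0$, so it must cross the edge $\{-1,0\}$; but before doing so it makes only finitely many steps, hence only finitely many visits to each site, and there is some largest site $N$ visited before time $T_{-1}$. For $n = N+1$ the walk never crosses $\{n-1,n\}$ before $T_{-1}$, so $W_{N+1} = 0$, and by Lemma~\ref{lem:comparingZandW} (the case $T_{-1}<\infty$) we get $Z^+_{N+1} = W_{N+1} = 0$. Thus $Z^+_n = 0$ for some $n$.

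Next, the contrapositive of the other direction. Suppose $T_{-1} = \infty$. Since $a \in \mathbf{A}$ is non-degenerate, the walk is transient to $+\infty$ (as noted in the text preceding Remark~\ref{rem:ComparingCrossings}), so $\lim_{t\to\infty} X_t = +\infty$ and every edge $\{n-1,n\}$ with $n \ge 1$ is crossed at least once — indeed crossed to the right one more time than to the left. Hence the total number of crossings of $\{n,n-1\}$ is at least $1$ for every $n \ge 1$ (and $W_0 = 1$ by definition), so by Remark~\ref{rem:ComparingCrossings}\eqref{c:1} we have $W_n \ge 1$ for all $n \ge 0$. Lemma~\ref{lem:comparingZandW} (the case $T_{-1} = \infty$) gives $Z^+_n \ge W_n \ge 1$ for all $n$, so $Z^+$ never hits $0$. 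Combining the two directions yields the claimed equivalence.

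I do not expect a genuine obstacle here: the content is entirely in Lemma~\ref{lem:comparingZandW}, and the remaining work is just packaging the two cases and checking that $W_n = 0$ (resp. $W_n \ge 1$) is forced in the appropriate regime. The one point requiring a little care is the finiteness statement used in the forward direction — that $T_{-1} < \infty$ implies only finitely many sites are visited before $T_{-1}$ — which is immediate since $T_{-1}$ is then a finite deterministic number of steps. The symmetric claim for $Z^-$ and $T_1$ needs no new idea, only the observation that applying all of Section~\ref{sec:survival vs. hitting} to the reflected arrow environment $\tilde a(x,\cdot) = a(-x,\cdot)^c$ turns $Z^+$ into $Z^-$ and $T_{-1}$ into $T_1$.
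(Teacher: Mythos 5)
Your proof is correct and follows essentially the same route as the paper's: it deduces both directions from Lemma~\ref{lem:comparingZandW} together with Remark~\ref{rem:ComparingCrossings}, taking $n = M+1$ where $M$ is the maximal site visited before $T_{-1}$ in the forward direction, and noting $W_n \ge 1$ when $T_{-1} = \infty$ for the converse, with the second equivalence obtained by the same reflection $\bar a(n,i) = 1 - a(-n,i)$. No gap.
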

\begin{proof}
For the first equivalence, assume first that $T_{-1}<\infty$, then $M:=\max\{X_t:t<T_{-1}\}<\infty$. Therefore by Lemma \ref{ZEqualsRightCrossings} we get that $Z^+_{M+1}=W_{M+1}=0$.
On the other hand, if $T_{-1}=\infty$, then $W_n\ge 1$ for all $n\ge 0$. Now if $Z^+_n=0$ then by Lemma \ref{ZisGraterThanRightCrossings} we have that $W_n\le Z^+_n =0$, a contradiction.
Considering $\bar{a}$ instead of $a$, where $\bar a (n,i)= 1- a (-n,i)$, the argument from the beginning of the present subsection shows
the second equivalence.
\end{proof}

\begin{rem}
The proof of Theorem \ref{thm:SurvivalEquivNonReturning} shows that if $M:=\max\{X_t:t<T_{-1}\}\in\N\cup\{\infty\}$ is the maximal position of the walk before reaching $-1$ and $\tau=\inf \{ n\ge 0 : Z^+_n = 0\} \in\N\cup\{\infty\}$ is the extinction time of $Z^+$, then $\tau = M+1$, where, by convention, $\infty = \infty + 1$.
\end{rem}

\subsection{Subduality of $Z^+$ and $Z^-$}\label{sec:comoz}

    There are two immediate properties of $U^+$ and $U^-$ which will be crucial for our arguments:
    \begin{obs}\label{obs:propsOfU} The following hold for any $b\in\{0,1\}^\N$.
    \begin{enumerate}
    \item\label{itm:UNondec} $U^+_b(x)$ and $U^-_b(x)$ are nondecreasing in $x$.
    \item\label{itm:UplusUminusSmallerThenIdentity} $U^+_b \circ U^-_b(x) <  x$ for all $x\in\Z_+$ (and equivalently $U^-_b \circ U^+_b(x) <  x$).
    \end{enumerate}
    \end{obs}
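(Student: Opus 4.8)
The plan is to read off both parts directly from the combinatorial meaning of $U^+_b$ and $U^-_b$. First I would record the following reformulation of \eqref{eq:talorenshtein}: for $x\ge 1$ let $g^+_b(x)$ denote the index of the $x$-th zero of the sequence $b$ and $g^-_b(x)$ the index of its $x$-th one (both finite when $b$ is non-degenerate). Then $U^+_b(x)=g^+_b(x)-x$ is exactly the number of $1$'s of $b$ lying strictly before its $x$-th zero, and symmetrically $U^-_b(x)=g^-_b(x)-x$ is the number of $0$'s lying strictly before its $x$-th one; also $U^+_b(0)=U^-_b(0)=0$. Every statement below will be proved for $U^+_b$; the corresponding statement for $U^-_b$ follows at once from the identity $U^-_b=U^+_{b^c}$.

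For item \ref{itm:UNondec}: the $(x+1)$-th zero of $b$ occurs strictly after the $x$-th zero, so $g^+_b(x+1)\ge g^+_b(x)+1$, and hence $U^+_b(x+1)=g^+_b(x+1)-(x+1)\ge g^+_b(x)-x=U^+_b(x)$; together with $U^+_b(1)\ge 0=U^+_b(0)$ this shows that $U^+_b$ is nondecreasing, and likewise for $U^-_b=U^+_{b^c}$.

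For item \ref{itm:UplusUminusSmallerThenIdentity}: fix $x\ge 1$ and set $y:=U^-_b(x)$, the number of zeros of $b$ strictly before its $x$-th one. If $y=0$ then $U^+_b\circ U^-_b(x)=U^+_b(0)=0<x$. If $y\ge 1$, observe that the zeros of $b$ lying before a fixed index form an initial segment of the sequence of all zeros of $b$; hence these $y$ zeros are precisely the first $y$ zeros of $b$, so $g^+_b(y)<g^-_b(x)$. Consequently every $1$ of $b$ lying before its $y$-th zero also lies before its $x$-th one, and since there are exactly $x-1$ ones of $b$ before its $x$-th one, we obtain $U^+_b(y)\le x-1<x$. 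Finally, applying this inequality with $b^c$ in place of $b$ and using $U^+_{b^c}=U^-_b$ and $U^-_{b^c}=U^+_b$ gives $U^-_b\circ U^+_b(x)<x$, the parenthetical claim.

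I do not expect a real obstacle here: the whole argument is a short counting exercise. The only points that require attention are the bookkeeping in converting \eqref{eq:talorenshtein} into the ``number of $1$'s before the $x$-th $0$'' description, the boundary cases $x=0$ and $y=0$ in item \ref{itm:UplusUminusSmallerThenIdentity} (where the strict inequality would otherwise fail and must be handled separately), and the remark that the zeros of $b$ before a given index form an initial segment of all its zeros, which is exactly what forces $g^+_b(y)<g^-_b(x)$. Non-degeneracy of $b$ enters only to guarantee that $g^+_b$, $g^-_b$, and hence $U^+_b$, $U^-_b$, are everywhere finite.
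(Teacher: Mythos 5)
The paper does not supply a proof of this observation: it simply labels these as ``two immediate properties of $U^+$ and $U^-$'' and moves on. Your argument is a correct and complete verification of the claim, and it is exactly the short counting argument one would expect. Your reformulation of \eqref{eq:talorenshtein} as ``$U^+_b(x)$ equals the number of ones of $b$ strictly before its $x$-th zero'' is the right reading of the formula, monotonicity then falls out of $g^+_b(x+1)\ge g^+_b(x)+1$, and the subidentity bound follows from noting that $g^+_b(y)<g^-_b(x)$ once $y:=U^-_b(x)\ge 1$, together with the base case $y=0$ handled separately. Your closing remark is also apt: non-degeneracy of $b$ is what keeps $U^\pm_b$ finite, and that is the regime in which the Subduality Lemma applies these facts. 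Nothing is missing.
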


    The next lemma is simple but crucial for the proof of Theorem \ref{thm:directionalZeroOneLaw}. For $z\in\Z$ we define $\theta^z:A\to A$ by $(\theta^z a)(x,i)=a(x+z,i)$, $x\in\Z$ and $i\in\N$ is the $\Z$-shift map by $z$ steps to the left.

    \begin{lem}[Subduality]\label{lem:subduality}
    Assume that for the arrow environment $a\in A$ the process $Z^+$ with initial value $Z^+_0=x$ has $Z_l^+\ge y$. Then on the shifted arrow environment $\theta^{l-1}a$, the process $Z^-$ with initial value $Z^-_0=y$ has $Z_l^-\le x$.
    \end{lem}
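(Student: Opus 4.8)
The plan is to unwind the definitions of $Z^+$ and $Z^-$ in terms of the functions $U^+_b$ and $U^-_b$ and then to induct on the coordinate. Write $b_j := a(j,\cdot)$ for $j \in \Z$. By definition $Z^+_l = U^+_{b_{l-1}}\circ U^+_{b_{l-2}}\circ\cdots\circ U^+_{b_0}(x)$. On the shifted environment $\theta^{l-1}a$ we have $(\theta^{l-1}a)(k,\cdot) = b_{k+l-1}$, so the $Z^-$ process started at $y$ there satisfies $Z^-_n = U^-_{(\theta^{l-1}a)(1-n,\cdot)}(Z^-_{n-1}) = U^-_{b_{l-n}}(Z^-_{n-1})$; in particular $Z^-_l = U^-_{b_{0}}\circ U^-_{b_{1}}\circ\cdots\circ U^-_{b_{l-1}}(y)$. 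So the two compositions run over the same blocks $b_0,\dots,b_{l-1}$ but in opposite order and with $U^+$ replaced by $U^-$, which is exactly the shape in which the two observations about $U^+,U^-$ can bite.

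The key step is a single-coordinate ``reversal'' inequality: for any $b\in\{0,1\}^\N$ and any $x',y'\in\Z_+$,
\begin{equation*}
U^+_b(x') \ge y' \quad\Longrightarrow\quad U^-_b(y') \le x'.
\end{equation*}
To see this, recall $U^+_b(x')$ counts the number of $1$'s strictly before the $x'$-th $0$ in $b$, and $U^-_b(y')$ counts the number of $0$'s strictly before the $y'$-th $1$ in $b$ (i.e.\ $U^-_b = U^+_{b^c}$). If $U^+_b(x') \ge y'$ then among the symbols of $b$ preceding the $x'$-th $0$ there are at least $y'$ ones; hence the $y'$-th $1$ occurs no later than the $x'$-th $0$, so the number of $0$'s before that $y'$-th $1$ is at most $x'-1 < x'$. (A cleaner route: $U^-_b$ and $U^+_b$ are nondecreasing (Observation \ref{obs:propsOfU}\eqref{itm:UNondec}) and, by Observation \ref{obs:propsOfU}\eqref{itm:UplusUminusSmallerThenIdentity}, $U^-_b\circ U^+_b(x') < x'$; combining, $U^+_b(x')\ge y'$ gives $U^-_b(y') \le U^-_b(U^+_b(x')) < x' $. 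This uses only results already in the excerpt, and avoids re-deriving the combinatorics by hand.)

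With this in hand I would induct on $l$. For the inductive passage, suppose we already know that running $U^+$ over the blocks $b_1,\dots,b_{l-1}$ starting at $x$ produces some value $x_1 := Z^+_{l-1}$ on the environment $\theta^{1}a$, and that $Z^+_l = U^+_{b_{l-1}}(\text{something})$... — more precisely, it is cleanest to induct on the number of composed maps: set $x_0 = x$, $x_{k} = U^+_{b_{k-1}}(x_{k-1})$ so $x_l = Z^+_l \ge y$, and set $y_0 = y$, $y_k = U^-_{b_{l-k}}(y_{k-1})$ so $y_l = Z^-_l$. Prove by downward/upward induction on $k$ that $y_k \le x_{l-k}$: the base case $k=0$ is $y_0 = y \le x_l$, which is the hypothesis; for the step, assume $y_{k}\le x_{l-k}$, apply monotonicity of $U^-_{b_{l-k-1}}$ to get $y_{k+1} = U^-_{b_{l-k-1}}(y_k) \le U^-_{b_{l-k-1}}(x_{l-k})$, and then use the reversal inequality with $b = b_{l-k-1}$, $x' = x_{l-k-1}$, $y' = x_{l-k} = U^+_{b_{l-k-1}}(x_{l-k-1})$ (so the hypothesis $U^+_{b}(x') \ge y'$ holds with equality) to conclude $U^-_{b_{l-k-1}}(x_{l-k}) \le x_{l-k-1}$, hence $y_{k+1} \le x_{l-k-1}$. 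Taking $k = l$ gives $Z^-_l = y_l \le x_0 = x$, as required.

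I do not expect a serious obstacle here; the statement is essentially a formal consequence of Observation \ref{obs:propsOfU}. The one point requiring care is bookkeeping the index shift — making sure that the $Z^-$ recursion on $\theta^{l-1}a$ really reads the blocks $b_0,\dots,b_{l-1}$ in reverse order, matching the $Z^+$ recursion — and checking that all intermediate values $x_k,y_k$ stay in $\Z_+$ (they do, since $a\in A$ is non-degenerate, so each $U^\pm_{b_j}$ is finite on $\N$). A secondary subtlety is the strict-versus-weak inequality in Observation \ref{obs:propsOfU}\eqref{itm:UplusUminusSmallerThenIdentity} when the argument is $0$: one should restrict attention to positive arguments or note $U^\pm_b(0)=0$ handles that case trivially, so the chain of inequalities never actually needs the strict version to fail gracefully at $0$.
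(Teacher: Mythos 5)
Your proof is correct and is essentially the same as the paper's: both reduce the claim to Observation \ref{obs:propsOfU}, namely monotonicity of $U^\pm_b$ together with $U^-_b\circ U^+_b(x)<x$, applied once per coordinate across the $l$ blocks $a(0,\cdot),\dots,a(l-1,\cdot)$. The paper writes this as one telescoping chain of inequalities while you phrase it as an induction on $k$ (with the ``reversal inequality'' as the one-step version), but these are the same argument in different notation; your remarks on the index shift under $\theta^{l-1}$ and on the $x'=0$ edge case are the right points to be careful about and are handled correctly.
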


    \begin{proof}
    Property \ref{itm:UplusUminusSmallerThenIdentity} of Observation \ref{obs:propsOfU} gives us that $U^-_{a({l-1},\cdot)} \circ U^+_{a({l-1},\cdot)}(x)\le x$ for all $x\in\Z_+$. Using this together with the monotonicity property \ref{itm:UplusUminusSmallerThenIdentity} of Observation \ref{obs:propsOfU} $l$ times, we get:
    $$ U^-_{a(0,\cdot)} \circ \ldots \circ U^-_{a({l-1},\cdot)} \circ U^+_{a({l-1},\cdot)} \circ \ldots \circ U^+_{a(0,\cdot)}(x)\le x.$$
    In other words $U^-_{a(0,\cdot)} \circ \ldots \circ U^-_{a({l-1},\cdot)}(m)\le x$, where $m:=Z_l^+=U^+_{a({l-1},\cdot)} \circ \ldots \circ U^+_{a(0,\cdot)}(x)$. By the assumption $m\ge y$ and so by the monotonicity property \ref{itm:UplusUminusSmallerThenIdentity} of Observation \ref{obs:propsOfU} also $U^-_{a(0,\cdot)} \circ \ldots \circ U^-_{a({l-1},\cdot)}(y)\le x$.
    To finish, note that the left hand side of the last inequality is by definition $Z_l^-\le x$, where the process $Z^-$ is defined on the shifted environment $\theta^{l-1}a$ with initial value $Z^-_0=y$.
    \end{proof}

   \section{A Kalikow type 0-1 law}\label{sec:0-1 Law for Directional Transience}

The main purpose of this section is to present the proof, originally by Kosygina and Zerner, of a Kalikow type 0-1 law, and by it set the ground for the proof of our main result, which is to be found in the next section. Remember the events $A^+$ and $A^-$ from the introduction.
    \begin{thm}\cite[Theorem 3.2]{kosygina2012excited} \label{thm:KalikowType}
    Let $\mu$ be a stationary ergodic and elliptic probability measure over cookie environments. Then $\PP_0(A^+\cup A^-)\in\{0,1\}$.
    \end{thm}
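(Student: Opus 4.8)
The strategy is to reduce the statement $\PP_0(A^+\cup A^-)\in\{0,1\}$ to an ergodicity statement about the arrow environment. First I would observe that, by the Fact following Lemma~\ref{lem:arrow_ergod}, we may realize the quenched walk via an arrow environment $a$ sampled from the measure $\nu$, which by Lemma~\ref{lem:arrow_ergod} is stationary and ergodic with respect to the $\Z$-shift $\theta$. Moreover, since $\mu$ is elliptic, $\nu$ is supported on non-degenerate arrow environments $\mathbf{A}$ (a short Borel--Cantelli / ergodicity argument: each column $a(x,\cdot)$ has infinitely many sign changes almost surely, because $\mu(\cookieenv(0,1)\in(0,1))=1$ forces both symbols to appear infinitely often in each column). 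So we work on $(\mathbf{A},\nu)$ with the ergodic shift $\theta$.

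The key point is that transience in a given direction is, after passing to arrow environments, essentially a shift-invariant event. Using Theorem~\ref{thm:SurvivalEquivNonReturning}, starting the walk at $0$, the event $\{T_{-1}=\infty\}$ (equivalently $A^+$ on $\mathbf{A}$, since $a\in\mathbf{A}$ forces $X_t\to+\infty$ once $T_{-1}=\infty$) coincides with $\{Z^+_n>0 \text{ for all }n\}$, where $Z^+$ is built from $a(0,\cdot),a(1,\cdot),\dots$ with $Z^+_0=1$. The complementary behavior $\{T_{-1}<\infty\}$ means the walk returns to $-1$; once it does, by translation invariance of the dynamics, the walk from $-1$ onward behaves like a fresh excited walk on the shifted environment $\theta^{-1}a$ started at $0$ (the columns to the left of and at $-1$ that were already touched have been ``used up,'' but the point is that on the event $\{T_{-1}<\infty\}$ we can iterate). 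Concretely, I would define, for $k\ge 0$, the event $B_k$ that the walk started at $0$ on environment $\theta^{-k}a$ (equivalently: the walk started at $-k$ on $a$) is transient to the right, and argue via the strong Markov property at the successive hitting times $T_0,T_{-1},T_{-2},\dots$ that on $(A^+\cup A^-)^c$ the walk visits every integer infinitely often, hence in particular $\PP_0\big((A^+\cup A^-)^c\big)$ can be compared across shifts.

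The cleanest route is the classical Kalikow-type argument: consider the tail-type event $G=(A^+\cup A^-)^c$, i.e.\ $\limsup X_n=+\infty$ and $\liminf X_n=-\infty$, so the walk is recurrent and visits every site infinitely often. On $G$, the walk reaches $-1$, and from the first such time the future is governed by $\theta^{-1}a$ with a fresh walk from $0$; iterating, $\PP_0(G)=\PP_0(G\mid \text{walk reaches }-1)\cdot\PP_0(T_{-1}<\infty)+\ldots$, and pushing this through one shows that the function $h(a):=\PP_{a,0}(G)$ (quenched probability) satisfies $h(a)\le h(\theta^{-1}a)$ and by the symmetric argument $h(a)\le h(\theta a)$ as well; wait — more carefully, one gets that $G$ is, up to $\nu$-null sets, a $\theta$-invariant event at the quenched level, so $\nu$-ergodicity forces $h\equiv 0$ or $h\equiv 1$, whence $\PP_0(G)=\E_\nu[h]\in\{0,1\}$ and therefore $\PP_0(A^+\cup A^-)\in\{0,1\}$.

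\textbf{Main obstacle.} The delicate step is making precise the claim that returning to $-1$ ``restarts'' the walk on the shifted environment: the columns $a(x,\cdot)$ for $x\le -1$ (and indeed for the sites already visited) have been partially consumed, so the restarted walk does not literally walk on $\theta^{-1}a$ with fresh columns. The correct formulation — and this is exactly what Lemma~\ref{lem:comparingZandW} and Theorem~\ref{thm:SurvivalEquivNonReturning} are designed to supply — is to phrase everything in terms of the processes $Z^+$ and $Z^-$, which only depend on the arrow environment (not on the walk's history), and to observe that $Z^+$ on $\theta^{-1}a$ records the relevant crossing counts of the restarted walk. So the real work is: (i) translate the event $G$ into a statement purely about $Z^+$ (and $Z^-$) read off shifted environments, using Theorem~\ref{thm:SurvivalEquivNonReturning}; (ii) check that this translated event is genuinely $\theta$-invariant modulo $\nu$-null sets; (iii) invoke ellipticity to stay inside $\mathbf{A}$ and ergodicity of $\nu$ to conclude. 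I expect step (ii) — the shift-invariance bookkeeping — to be where all the care is needed; steps (i) and (iii) are essentially quotations of the preliminaries.
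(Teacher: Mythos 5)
Your plan hinges on two claims that do not hold. First, you assert that on $\mathbf{A}$ the events $\{T_{-1}=\infty\}$ and $A^+$ coincide, but only the forward implication is true: $T_{-1}=\infty$ forces $X_t\to+\infty$, whereas the walk can perfectly well hit $-1$ and still escape to $+\infty$, so $A^+\not\subset\{T_{-1}=\infty\}$. The correct statement (Corollary~\ref{cor:TransVsExtinctions}) is only at the level of annealed probabilities: $\PP_0(T_{-1}=\infty)>0\iff\PP_0(A^+)>0$; it is not an a.s.\ equivalence of events. Second, and more fundamentally, the event $G=(A^+\cup A^-)^c$ is \emph{not} a $\theta$-invariant function of the arrow environment, even up to $\nu$-null sets, and passing to $Z^\pm$ does not repair this. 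The Kalikow argument for RWRE relies on the fact that after the walk first reaches $m$, the continuation has exactly the law of a walk started at $m$ on the same (static) environment; here the columns at sites already visited have been partially consumed, so the continuation from $-1$ is not a fresh walk on $\theta^{-1}a$, and the survival events $S_\pm$ read off $\theta^{m}a$ are genuinely different events from those read off $a$. There is no inequality of the type $h(a)\le h(\theta^{-1}a)$ that one can establish in general (it would require some monotonicity of the quenched recurrence probability under cookie consumption, which fails without extra hypotheses). This is precisely why a 0-1 law for ERW needed new ideas rather than a direct transplant of Kalikow.

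The paper's actual proof avoids any shift-invariance of $G$. After reducing to the non-degenerate case (Corollary~\ref{cor:ellReduction}) it runs a dichotomy on the quantity $\PP_0(T_{-1}=\infty)$ (and its mirror). If this is positive, the ergodic theorem applied to the stationary indicator process of optional regeneration positions (Lemma~\ref{le:inf_opt_reg}) yields infinitely many regeneration positions a.s.; combined with Lemma~\ref{lem:TisInfiniteFinitelyManyExcursions} and non-degeneracy of the cookies, the walk returns to $0$ only finitely often, so $\PP_0(A^+\cup A^-)=1$. If instead both $\PP_0(T_{-1}<\infty)=1$ and $\PP_0(T_1<\infty)=1$, then the finite-modification/ellipticity argument of Lemma~\ref{lem:TisFiniteExcursionsAreFinite} shows all excursions are a.s.\ finite, so the walk is a.s.\ recurrent and $\PP_0(A^+\cup A^-)=0$. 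If you want to salvage a proof along your lines you would essentially have to re-derive this dichotomy; as stated, the shift-invariance of $G$ is the missing (and false) ingredient.
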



An interesting question 
related to Theorem \ref{thm:KalikowType} is the following:

\begin{prob}
Can the ellipticity assumption be weakened in Theorem \ref{thm:KalikowType}? If so, can it be weakened also in Theorem \ref{thm:directionalZeroOneLaw}?
\end{prob}

    As a convention, we say a probability measure $\mu$ over cookie environments has a given property of cookie environments if every cookie environment has it $\mu$-a.s.\ We say $\mu$ satisfies a given property $P$ of arrow environments if almost every arrow environment has the property $P$ with respect to the annealed measure $\annealedP$ associated to $\mu$. For example $\mu$ is elliptic if $\mu ((0,1)^{\Z\times\N})=1$, and $\mu$ is non-degenerate if the induced measure on arrow environments satisfies $\annealedP (a\in A)=1$.

    Unless otherwise mentioned, from now onwards we assume that $\mu$ is a stationary ergodic and elliptic probability distribution over cookie environments. As a first reduction to the proofs of both Theorem \ref{thm:KalikowType} and Theorem \ref{thm:directionalZeroOneLaw} we will use the following definition and results from Section 2 of Kosygina and Zerner \cite{kosygina2012excited}, regarding the question of finiteness of the ERW range.
    For $x\in\Z$ and $\cookieenv\in\Om$, let $R(x,\cookieenv)$ be the event that $\sum_{i=1}^\infty (\cookieenv(x,i))<\infty$ and $L(x,\cookieenv)$ the event that $\sum_{i=1}^\infty (1-\cookieenv(x,i))<\infty$.

    Notice that it follows from the Borel-Cantelli Lemma that a probability measure $\mu$ over cookie environments satisfies $\mu(R(x,\cookieenv))=\mu(L(x,\cookieenv))=0$ for every $x\in\Z$, if and only if it is non-degenerate. Moreover, the Borel-Cantelli Lemma also implies the following lemma (see e.g. \cite[Lemma 2.2]{kosygina2012excited}).

    \begin{lem}\label{lem:NotGettingStuck}
    Assume that $\mu$ is a non-degenerate probability measure over cookie environments. Then $\annealedP_0$-a.s.\ $$\limsup_{n\to\infty}X_n\in\{-\infty,+\infty\}\ \mbox{and}\ \liminf_{n\to\infty} X_n\in\{-\infty,+\infty\}$$
    \end{lem}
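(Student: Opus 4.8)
\textbf{Proof proposal for Lemma~\ref{lem:NotGettingStuck}.}

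The plan is to show that the walk cannot be eventually confined to any finite interval, so that both $\limsup X_n$ and $\liminf X_n$ are $\pm\infty$. The key device is the classical Borel--Cantelli argument used for recurrence/transience of excited walks: at any fixed site $x$, if the walk visits $x$ infinitely often, then the sequence of cookies $\cookieenv(x,1),\cookieenv(x,2),\ldots$ that the walk consumes there would have to produce only finitely many steps in one of the two directions, because otherwise the walk would leave the interval. First I would fix $x\in\Z$ and condition on the cookie environment $\cookieenv$. On the event that $x$ is visited infinitely often, the walk makes a step from $x$ at the $n$-th visit according to $\cookieenv(x,n)$; since these are nested conditional Bernoulli variables, a Borel--Cantelli (or Lévy's conditional Borel--Cantelli / martingale) argument shows that a.s.\ on $\{x$ visited i.o.$\}$ the walk moves right from $x$ infinitely often and moves left from $x$ infinitely often, \emph{unless} $\sum_i \cookieenv(x,i)<\infty$ (event $R(x,\cookieenv)$) or $\sum_i(1-\cookieenv(x,i))<\infty$ (event $L(x,\cookieenv)$). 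Since $\mu$ is non-degenerate, $\mu(R(x,\cookieenv)\cup L(x,\cookieenv))=0$ for every $x$, so $\annealedP_0$-a.s.\ this exceptional behaviour never occurs at any site.

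Next I would combine these site-wise statements: $\annealedP_0$-a.s., for every $x\in\Z$, either $x$ is visited only finitely often, or the walk steps both left and right from $x$ infinitely often. Now suppose, for contradiction, that with positive probability $\limsup_n X_n = b <\infty$ (the case $\liminf = a > -\infty$ is symmetric, replacing $\cookieenv$ by its reflection as in the proof of Theorem~\ref{thm:SurvivalEquivNonReturning}). On this event the walk is eventually bounded above by $b$ and visits $b$ infinitely often (being a nearest-neighbour walk whose $\limsup$ is $b$). But then, by the previous paragraph, the walk steps \emph{right} from $b$ infinitely often, i.e.\ it reaches $b+1$ infinitely often, contradicting $\limsup X_n = b$. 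Hence $\limsup_n X_n\in\{-\infty,+\infty\}$ and, symmetrically, $\liminf_n X_n\in\{-\infty,+\infty\}$, $\annealedP_0$-a.s.

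The main obstacle is the first paragraph: making the conditional Borel--Cantelli argument rigorous for a process whose increments at a site are a \emph{deterministic} but environment-dependent sequence of success probabilities, used in order only along the (random) subsequence of visits to that site. The clean way is to decouple using the arrow environment of Section~\ref{sec:arrow}: under the annealed measure the steps at site $x$ are read off from the i.i.d.\ uniforms $u(x,\cdot)$ against the thresholds $\cookieenv(x,\cdot)$, so $\{$steps right from $x\}$ and $\{$steps left from $x\}$ are, on the event that $x$ is visited infinitely often, in bijection with the $1$'s and $0$'s of the arrow sequence $a(x,\cdot)$; the number of $0$'s is a.s.\ infinite precisely when $\mu$-a.s.\ $\sum_i(1-\cookieenv(x,i))=\infty$, i.e.\ by non-degeneracy. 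This reduces everything to the standard second Borel--Cantelli lemma applied to independent (across $i$) events of probabilities $\cookieenv(x,i)$ and $1-\cookieenv(x,i)$, and to a union bound over the countably many sites $x\in\Z$. I would simply cite \cite[Lemma~2.2]{kosygina2012excited} for this, since the statement is exactly theirs, and include the short arrow-environment reformulation for self-containedness.
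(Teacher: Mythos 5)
Your proof is correct and matches the argument the paper gestures at: the paper omits the proof, citing \cite[Lemma~2.2]{kosygina2012excited} and noting that it follows from the Borel--Cantelli lemma, which is exactly the route you take. Two points worth flagging that your write-up handles, but which deserve emphasis: (i) the passage from ``$\limsup X_n$ finite with positive probability'' to ``$\limsup X_n = b$ for some fixed $b$ with positive probability'' uses countability of $\Z$, and then $\limsup X_n = b$ for a nearest-neighbour integer walk does force $X_n = b$ infinitely often; (ii) the conditional-Borel--Cantelli step is where the delicacy lies, since the cookie $\cookieenv(x,n)$ is consumed at the $n$-th visit to $x$ along a random subsequence of times --- your reduction via the arrow environment, reading the steps at $x$ off the i.i.d.-thresholded column $a(x,\cdot)$ so that second Borel--Cantelli applies directly conditionally on $\cookieenv$, is precisely the clean resolution and is consistent with the paper's own definition of non-degeneracy through $\annealedP(a\in A)=1$.
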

    \begin{thm}\label{thm:range}\cite[Theorem 2.3]{kosygina2012excited}
    Let $\mu$ be a stationary ergodic and elliptic probability measure over cookie environments.
    \begin{enumerate}
    \item[(a)] If $\mu(R(0,\cookieenv)))>0$ and $\mu(L(0,\cookieenv)))>0$ then
    the range is $\annealedP_0$-a.s.\
    finite.
    \item[(b)] If $\mu(R(0,\cookieenv)))=0$ and $\mu(L(0,\cookieenv)))>0$ then $\annealedP_0(A^+)=1$.
    \item[(c)] If $\mu(R(0,\cookieenv)))>0$ and $\mu(L(0,\cookieenv)))=0$ then $\annealedP_0(A^-)=1$.
    \item[(d)] If $\mu(R(0,\cookieenv)))=0$ and $\mu(L(0,\cookieenv)))=0$ then the range is $\annealedP_0$-a.s.\
    infinite.
    \end{enumerate}
    \end{thm}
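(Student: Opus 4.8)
The plan is to split into the four cases and use the Borel--Cantelli-type dichotomy together with ergodicity. First, let me recall the role of the events $R(x,\cookieenv)$ and $L(x,\cookieenv)$: on $R(x,\cookieenv)$ there are only finitely many cookies at $x$ pushing right (so eventually, after finitely many visits to $x$, every further visit sends the walk left), and on $L(x,\cookieenv)$ symmetrically every further visit eventually sends the walk right. By stationarity of $\mu$ under $\theta$, the quantities $\mu(R(x,\cookieenv))$ and $\mu(L(x,\cookieenv))$ do not depend on $x$, and by ergodicity the indicator functions $\one_{R(x,\cookieenv)}$, $\one_{L(x,\cookieenv)}$ obey a $0$--$1$ law for their Cesàro frequencies. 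I would first record this: if $\mu(R(0,\cookieenv))>0$ then $\mu$-a.s.\ infinitely many sites $x>0$ satisfy $R(x,\cookieenv)$ and infinitely many sites $x<0$ satisfy $R(x,\cookieenv)$, and similarly for $L$.

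For part (a), suppose $\mu(R(0,\cookieenv))>0$ and $\mu(L(0,\cookieenv))>0$. Pick (a.s.) a site $x_R>0$ with $R(x_R,\cookieenv)$ and a site $x_L<0$ with $L(x_L,\cookieenv)$; then there is a finite random $N$ such that after each site has been visited $N$ times, every subsequent visit to $x_R$ moves the walk left and every subsequent visit to $x_L$ moves it right. This traps the walk: it cannot cross $x_R$ to the right infinitely often, nor $x_L$ to the left infinitely often (each such crossing would have to come after the trapping time, contradiction), so by Lemma \ref{lem:NotGettingStuck} the walk neither tends to $+\infty$ nor to $-\infty$; hence $\limsup X_n$ and $\liminf X_n$ are both finite, so the range is finite a.s. Parts (b) and (c) are symmetric, so I treat (b): $\mu(R(0,\cookieenv))=0$, $\mu(L(0,\cookieenv))>0$. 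Non-degeneracy on the right half-line is automatic from $\mu(R(x,\cookieenv))=0$ for all $x\ge 0$; combined with $\mu(L(0,\cookieenv))>0$ we can pick a.s.\ a site $x_L<0$ with $L(x_L,\cookieenv)$, which after finitely many visits never lets the walk move left of $x_L$ again, so $\liminf X_n > -\infty$ a.s.; by Lemma \ref{lem:NotGettingStuck} then $\liminf X_n = +\infty$, i.e.\ $A^+$ occurs a.s. For part (d), $\mu(R(0,\cookieenv))=\mu(L(0,\cookieenv))=0$ means $\mu$ is non-degenerate, so a.s.\ every column is non-degenerate, hence every site that is visited at all is visited infinitely often or the walk passes it cleanly; either way no site can act as a one-sided trap, and one shows the range cannot be finite — if it were finite with maximum $M$ and minimum $m$, then $M$ would be visited infinitely often but, by non-degeneracy of $a(M,\cdot)$, the walk would eventually step right of $M$, a contradiction.

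The main obstacle I expect is making the ``trapping'' arguments in (a) and (b) fully rigorous: one needs to be careful that $R(x,\cookieenv)$ only controls cookies \emph{after} the finitely many right-pushing ones are exhausted, so one must verify that a site with property $R$ is either visited finitely often (fine) or visited infinitely often and then eventually becomes a reflecting-type barrier — and to combine this with Lemma \ref{lem:NotGettingStuck} to upgrade ``$\liminf$ finite'' to a genuine statement about transience. Since this theorem is quoted from \cite[Theorem 2.3]{kosygina2012excited}, I would in fact just cite it, but the sketch above is the argument I would reconstruct; the ergodic $0$--$1$ law for the frequency of $R$- and $L$-sites and Lemma \ref{lem:NotGettingStuck} are the two workhorses.
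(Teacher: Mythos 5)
The paper actually omits the proof of Theorem~\ref{thm:range} entirely (``The proof of Theorem \ref{thm:range} is omitted'') and cites \cite[Theorem 2.3]{kosygina2012excited}, so there is no in-paper proof to compare against. Your reconstruction, however, has two substantive gaps.

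First, you invoke Lemma~\ref{lem:NotGettingStuck} in cases (a), (b), (c), but that lemma requires $\mu$ to be non-degenerate, and the very hypotheses of those cases ($\mu(R(0,\cookieenv))>0$ or $\mu(L(0,\cookieenv))>0$) are, by the paper's Borel--Cantelli remark just above the lemma, exactly the statement that $\mu$ is \emph{not} non-degenerate. Your attempt to salvage this via ``non-degeneracy on the right half-line'' in (b) also does not hold: $\mu(R(0,\cookieenv))=0$ forces infinitely many $1$-arrows a.s.\ in every column but says nothing about infinitely many $0$-arrows, and indeed $\mu(L(0,\cookieenv))>0$ together with $\Z$-stationarity means columns on the right half-line can be degenerate with positive probability.

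Second, the trapping step is too weak. A site $x_R$ with $R(x_R,\cookieenv)$ becomes a left-pushing barrier only \emph{after} the finitely many right arrows are consumed, so the walk may simply cross $\{x_R,x_R+1\}$ rightward once (or a few times) and escape past it before the barrier ``activates''; picking a single such $x_R$ and $x_L$ does not bound the range. What actually works is stronger: by ellipticity, conditional on $R(0,\cookieenv)$ one has $\prod_i\bigl(1-\cookieenv(0,i)\bigr)>0$, so $a(0,\cdot)\equiv 0$ has positive probability, and by Lemma~\ref{lem:arrow_ergod} and the ergodic theorem there are a.s.\ infinitely many sites $x>0$ (and $x<0$) with $a(x,\cdot)\equiv 0$; such a site is an \emph{absolute} barrier from the first visit on. The symmetric statement holds for $L$. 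This repairs (a). For (b) the dichotomy you implicitly use is really ``either $\liminf X_n\ge x_L$ or $\limsup X_n< x_L$'' --- you cannot conclude $\liminf>-\infty$ outright --- so you must first rule out the second branch by showing $\limsup X_n=+\infty$ a.s.\ (if $\limsup=M<\infty$ then $M$ is visited infinitely often, yet $a(M,\cdot)$ has infinitely many $1$'s since $\mu(R(0,\cookieenv))=0$, a contradiction), and then use the infinitely many sites $y>0$ with $a(y,\cdot)\equiv 1$ to push $\liminf$ all the way to $+\infty$. Part (d) is fine as you wrote it.
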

    The proof of Theorem \ref{thm:range} is omitted.
    \begin{cor}\label{cor:ellReduction}
    Let $\mu$ be a stationary ergodic and elliptic probability measure over cookie environments. If $\mu$ is not non-degenerate then $\annealedP_0(A^+),\annealedP_0(A^-) \in \{0,1\}$. In particular, in this case $\annealedP_0(A^+\cup A^-) \in \{0,1\}$.
    \end{cor}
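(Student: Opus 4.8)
The plan is to show that the hypothesis that $\mu$ fails to be non-degenerate places us in one of the cases (a), (b), (c) of Theorem~\ref{thm:range}, and that in each of these the conclusion is essentially immediate.

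First I would note that, by the $\theta$-invariance of $\mu$, the quantities $\mu(R(x,\cookieenv))$ and $\mu(L(x,\cookieenv))$ do not depend on $x\in\Z$: the event $R(x,\cookieenv)$ is the $\theta$-preimage of $R(0,\cookieenv)$, and likewise for $L$. Combined with the Borel--Cantelli characterization recorded just before Lemma~\ref{lem:NotGettingStuck} --- namely that $\mu$ is non-degenerate iff $\mu(R(x,\cookieenv))=\mu(L(x,\cookieenv))=0$ for every $x$ --- this says that $\mu$ is non-degenerate if and only if $\mu(R(0,\cookieenv))=\mu(L(0,\cookieenv))=0$. Hence, if $\mu$ is not non-degenerate then $\mu(R(0,\cookieenv))>0$ or $\mu(L(0,\cookieenv))>0$; in the language of Theorem~\ref{thm:range} we are then in case (a), (b), or (c), never in case (d).

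Then I would dispatch the three cases. In case (a) the range is $\annealedP_0$-a.s.\ finite, so $\limsup_{n\to\infty}X_n$ and $\liminf_{n\to\infty}X_n$ are both finite and neither $A^+$ nor $A^-$ occurs, giving $\annealedP_0(A^+)=\annealedP_0(A^-)=0$. In case (b), Theorem~\ref{thm:range} gives $\annealedP_0(A^+)=1$, and since $A^+$ and $A^-$ are disjoint this forces $\annealedP_0(A^-)=0$; case (c) is symmetric. In all three cases $\annealedP_0(A^+),\annealedP_0(A^-)\in\{0,1\}$, and since $A^+\cap A^-=\emptyset$ we obtain $\annealedP_0(A^+\cup A^-)=\annealedP_0(A^+)+\annealedP_0(A^-)\in\{0,1\}$. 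I do not expect any genuine obstacle here: the only step requiring a line of argument is the shift-invariance of $x\mapsto\mu(R(x,\cookieenv))$, which is routine, and it is precisely what guarantees that ``not non-degenerate'' is the complement of case (d) rather than merely a statement about some unspecified site; after that the corollary is pure bookkeeping on top of Theorem~\ref{thm:range}.
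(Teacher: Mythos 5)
Your argument is correct and is exactly the intended derivation: the paper states Corollary~\ref{cor:ellReduction} without an explicit proof, clearly leaving it as the immediate case analysis over parts (a), (b), (c) of Theorem~\ref{thm:range} that you carry out, with the Borel--Cantelli characterization and $\theta$-invariance ruling out case (d). Nothing is missing and nothing differs from what the paper expects.
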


    Given a walk $X_n$ on $\Z$ , a \emph{right excursion} (from $0$) is a sequence of steps $X_{\tau_0},\ldots, X_{\tau_1}\le \infty$ of the walk such that $X_{\tau_0}=0$, either $X_{\tau_1}=0$ or $\tau_1=\infty$, and $X_t>0$ for all $\tau_0<t<\tau_1$.
     Call $m\geq 0$ an \emph{optional regeneration position} for an arrow environment $a$ if the walk started at $m$ never hits $m-1$, that is if $T_{m-1}=\infty$.
     We call $m\geq 0$ a \emph{regeneration position} if in addition, when starting the walk from $0$, the walk $X_n$ reaches $m$ after some finite time.
     Note that the arrow environment on $[m,\infty)$ remains unchanged until the walker reaches $m$ for the first time (if it ever does). It follows that if $m$ is an (optional) regeneration position, and the walk $X_n$ reaches $m$, then afterwards it will never return to $m-1$.

     \begin{lem}\label{le:inf_opt_reg} Let $\mu$ be a stationary ergodic probability measure over cookie environments.
     If $\mathbb{P}_0(T_{-1}=\infty)>0$ then there are $\mathbb{P}_0$-a.s.\ infinitely many optional regeneration positions.
     \end{lem}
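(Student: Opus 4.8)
The plan is to exploit the stationary ergodic structure of the arrow environment (Lemma~\ref{lem:arrow_ergod}) together with the hypothesis $\PP_0(T_{-1}=\infty)>0$, which via Theorem~\ref{thm:SurvivalEquivNonReturning} says that with positive probability the process $Z^+$ started from $Z^+_0=1$ survives forever. The key observation is that the event ``$m$ is an optional regeneration position for $a$'' is a \emph{shift-translate} of the event ``$0$ is an optional regeneration position'': namely $m$ is an optional regeneration position for $a$ if and only if $0$ is an optional regeneration position for $\theta^m a$, since both events depend only on the arrows $a(x,\cdot)$ with $x\ge m$ and on the walk started at $m$. Thus the indicator sequence $(\one\{m \text{ is an optional regeneration position for }a\})_{m\ge 0}$ is (a one-sided restriction of) a stationary sequence under the measure-preserving shift $\theta$.

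First I would make precise that ``$0$ is an optional regeneration position'' is a measurable event of positive $\nu$-probability: by the Fact relating the quenched cookie walk to the walk on the sampled arrow environment, $\nu(\,0 \text{ is an optional regeneration position}\,)=\PP_0(T_{-1}=\infty)>0$ (here one uses translation invariance of the annealed measure in the starting point, so the walk from $m$ on $\theta^m a$ has the same law as the walk from $0$ on $a$). Second, I would invoke the ergodic theorem (or, more simply, the Poincar\'e recurrence theorem / the fact that an ergodic shift visits any positive-measure set infinitely often): since $\theta$ is ergodic with respect to $\nu$ and the set $G:=\{a:\,0\text{ is an optional regeneration position for }a\}$ has $\nu(G)>0$, for $\nu$-a.e.\ $a$ the set $\{m\ge 0:\theta^m a\in G\}$ is infinite. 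Translating back, $\nu$-a.e.\ arrow environment has infinitely many optional regeneration positions, and hence the same holds $\PP_0$-a.s.

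The main subtlety — and the step I would be most careful about — is the bookkeeping between the annealed measure $\PP_0$, the arrow-environment measure $\nu$, and the underlying $(\Omega\times U,\mu\times\PP_U)$: the event in the statement is about $\PP_0$, i.e.\ about the joint law of the cookie environment and the walk, whereas the ``infinitely many $m$'' conclusion is naturally a statement about $\nu$-a.e.\ arrow environment. One must check that ``optional regeneration position'' is indeed a function of the arrow environment alone (it is: $T_{m-1}=\infty$ for the deterministic walk on $a$ started at $m$ is determined by $a$), so that the $\nu$-a.s.\ statement transfers to a $\PP_0$-a.s.\ statement. A secondary point is that one should phrase the ergodic input correctly for a \emph{one-sided} index set $m\ge 0$ while $\theta$ acts on the two-sided environment; this is harmless since $\{m\ge 0:\theta^m a\in G\}$ finite for a positive-measure set of $a$ would, by the ergodic theorem applied to $\one_G$, force $\nu(G)=0$. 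No use of non-degeneracy is needed here beyond what is already implicit, and the lemma does not even require ellipticity.
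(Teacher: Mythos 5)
Your proposal is correct and follows essentially the same route as the paper: identify the event ``$m$ is an optional regeneration position'' as a shift of the event at $0$, note it has positive probability equal to $\PP_0(T_{-1}=\infty)$, and apply the ergodic theorem for the measure $\nu$ (via Lemma~\ref{lem:arrow_ergod}) to conclude there are $\nu$-a.s.\ infinitely many such positions. The paper phrases the ergodic input as the Ces\`aro average of indicators converging to $p>0$, while you offer the equivalent Poincar\'e-recurrence phrasing and add a careful remark on transferring the $\nu$-a.s.\ statement to a $\PP_0$-a.s.\ one; both are fine and the extra care is not misplaced.
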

     \begin{proof}
       Let $p:=\mathbb{P}_0(T_{-1}=\infty)>0$.
      By stationarity of the arrow environment, $\mathbb{P}_m(T_{m-1}= \infty) = p$ for any $m\geq 0$. By Lemma \ref{lem:arrow_ergod} we may apply the ergodic theorem to the measure $\nu$ to get
    $$
    \frac{1}{n}\sum_{m=1}^n \mathbf{1}_{\{m \text{ is a optional regeneration position}\}} \to p\,\, \nu-\text{a.s.}.
    $$
    In particular there are $\nu$-a.s.\ infinitely many optional regeneration positions.
     \end{proof}

    The following lemma is a part of Lemma 8 of \cite{kosygina2008positively}, which is proved for the the i.i.d.\ case.

    \begin{lem}\label{lem:TisInfiniteFinitelyManyExcursions} Let $\mu$ be a stationary ergodic and non-degenerate probability measure over cookie environments. If $\mathbb{P}_0(T_{-1}=\infty)>0$ then there are a.s.\ only finitely many right excursions.
    \end{lem}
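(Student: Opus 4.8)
The plan is to argue by contradiction: suppose $\mathbb{P}_0(T_{-1}=\infty)>0$ but with positive probability there are infinitely many right excursions. The key observation is that each right excursion that eventually returns to $0$ is a ``failed attempt'' to escape, and I want to show that after finitely many such failures the walk must succeed, using the optional regeneration positions supplied by Lemma~\ref{le:inf_opt_reg} together with the fact that the arrow environment to the right of any unvisited site is still fresh.

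First I would set $p:=\mathbb{P}_0(T_{-1}=\infty)>0$ and invoke Lemma~\ref{le:inf_opt_reg} to get $\mathbb{P}_0$-a.s.\ infinitely many optional regeneration positions $0\le m_1<m_2<\cdots$. Next, the crucial monotonicity input is already in the excerpt: by Observation~\ref{obs:propsOfU}(1) the maps $U^+_b$ are nondecreasing, so the process $Z^+$ is monotone in its initial value. By Theorem~\ref{thm:SurvivalEquivNonReturning} and the remark following it, starting the walk from $0$ we have $T_{-1}=\infty$ iff $Z^+$ (with $Z^+_0=1$) never hits $0$; and the number of right excursions that return to $0$ is controlled by how many times the walk comes back to $0$, which in the $Z^+$ picture corresponds to the trajectory $(Z^+_n)$ surviving past successive ``low points''. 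The point is that each returning right excursion means the walk came back to $-1+1=0$ having started a fresh excursion, and the environment seen on $[0,\infty)$ during the next excursion is exactly the same arrow environment; so if $m_k$ is an optional regeneration position and the walk from $0$ ever reaches $m_k$, it never returns to $m_k-1$, hence in particular can make at most $m_k$ more returns to $0$ (one per level in $\{0,\dots,m_k-1\}$, by the crossing-counting in Remark~\ref{rem:ComparingCrossings}).

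So the argument reduces to: on the event of infinitely many right excursions, the walk returns to $0$ infinitely often, hence (being on a non-degenerate arrow environment) $\liminf_n X_n=-\infty$ by Lemma~\ref{lem:NotGettingStuck} cannot be the issue — rather, I would show the walk must then actually reach each $m_k$. Indeed, during a right excursion the walk either returns to $0$ or diverges to $+\infty$; if it returns infinitely often it must, on the way, attain arbitrarily large maxima or it would be confined to a finite interval, contradicting non-degeneracy via Lemma~\ref{lem:NotGettingStuck} (a walk trapped in $[0,M]$ forever is impossible when every column is non-degenerate). Hence the walk reaches $m_1$; from that time on it never returns to $m_1-1\ge 0$, so it makes only finitely many further returns to $0$ — contradiction with infinitely many right excursions. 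Therefore $\mathbb{P}_0$-a.s.\ there are only finitely many right excursions.

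The main obstacle I anticipate is making precise the step ``if the walk returns to $0$ infinitely often then it reaches $m_1$.'' One must rule out the scenario where the walk oscillates, visiting $0$ infinitely often but with a bounded running maximum; this is exactly where non-degeneracy of the arrow environment (each column has infinitely many $0$--$1$ alternations) and Lemma~\ref{lem:NotGettingStuck} are needed, and one has to phrase it in terms of the walk on the fixed arrow environment rather than annealed, so that ``$m_1$ is an optional regeneration position'' is a statement about that same environment. Once that is handled, the counting bound from Remark~\ref{rem:ComparingCrossings} (each level in $\{0,\dots,m_1-1\}$ is crossed finitely often once $m_1$ is a regeneration position) closes the argument cleanly.
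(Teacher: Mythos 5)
Your proposal is correct and rests on the same two pillars as the paper's proof, namely Lemma~\ref{le:inf_opt_reg} (a.s.\ infinitely many optional regeneration positions) and Lemma~\ref{lem:NotGettingStuck} (so that infinitely many returns to $0$ force $\limsup_n X_n=+\infty$, hence the walk hits a regeneration position $m\ge 1$ and thereafter never revisits $0$); the paper phrases this as a direct case split on $\limsup X_n$ rather than a contradiction, but the substance is the same. The detours through the $Z^+$ process, Observation~\ref{obs:propsOfU}, and the crossing-count of Remark~\ref{rem:ComparingCrossings} are unnecessary (and the clause ``can make at most $m_k$ more returns to $0$'' should read ``makes no more returns to $0$''), but they do not affect the validity of the argument.
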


    \begin{proof}
    Consider $\limsup X_n$. If $\limsup X_n<\infty$, then by Lemma \ref{lem:NotGettingStuck} $A^-$ holds. In particular, the number of right excursions is finite. If $\limsup X_n=\infty$, then since by Lemma \ref{le:inf_opt_reg} on the event $\limsup X_n=\infty$ there a.s.\ exist (infinitely many) optional regeneration positions, the walk hits such a position $m$ at some finite time and from that time on it never returns to $m-1$, let alone $0$, and thus there are only finitely many right excursions.
    \end{proof}

    Also the following lemma is a part of Lemma 8 of \cite{kosygina2008positively}.

    \begin{lem}\label{lem:TisFiniteExcursionsAreFinite}
    If $\PP_{\cookieenv,0}(T_{-1}<\infty)=1$ and $\cookieenv$ is elliptic, that is $\cookieenv\in(0,1)^{\Z\times\N}$, then all right excursions are $\PP_{\cookieenv,0}$-a.s.\ finite.
    \end{lem}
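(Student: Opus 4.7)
I would argue by contradiction: suppose that with positive $\PP_{\cookieenv,0}$-probability some right excursion is infinite, and derive a contradiction with $\PP_{\cookieenv,0}(T_{-1}<\infty)=1$.

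First, an infinite right excursion beginning at some time $\sigma$ forces $X_t\ge 1$ for all $t>\sigma$; combined with Lemma~\ref{lem:NotGettingStuck} (applied from $\sigma$) this gives $X_n\to+\infty$, so the walker visits $0$ only finitely many times. Let those visits be $\tau_0=0<\tau_1<\cdots<\tau_M=\sigma$. Decompose the event ``some right excursion is infinite'' as the \emph{disjoint} union $\bigcup_{M\ge 0}E_M$, where
\[
E_M:=\{\tau_M<\infty,\ X_t\ge 1\text{ for every }t>\tau_M\}.
\]

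Second, I would handle the base case $M=0$ directly: on $E_0$ the walker never returns to $0$ and $X_1=1$, so by nearest-neighborhood it cannot reach $-1$, which means $E_0\subseteq\{T_{-1}=\infty\}$; the hypothesis then forces $\PP_{\cookieenv,0}(E_0)=0$.

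Third, for $M\ge 1$ I would apply the strong Markov property at the stopping time $\tau_M$. Writing $\cookieenv^{\tau_M}$ for the residual cookie environment after consuming the cookies used along the trajectory up to $\tau_M$ (which is again elliptic, since ellipticity is preserved by shifting the stack at each site), one obtains
\[
\PP_{\cookieenv,0}(E_M)=\E_{\cookieenv,0}\bigl[\mathbf{1}_{\tau_M<\infty}\,\PP_{\cookieenv^{\tau_M},0}(E_0)\bigr].
\]
By the $M=0$ argument applied inside the residual, $\PP_{\cookieenv^{\tau_M},0}(E_0)=0$ provided $\PP_{\cookieenv^{\tau_M},0}(T_{-1}<\infty)=1$ for $\PP_{\cookieenv,0}$-a.e.\ realization of $\cookieenv^{\tau_M}$. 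Summing (countably many) zero contributions would then give $\PP_{\cookieenv,0}(\bigcup_M E_M)=0$, contradicting the assumption.

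\textbf{Main obstacle.} The crux is propagating the a.s.\ finiteness of $T_{-1}$ to the residual environments $\cookieenv^{\tau_M}$. When $\tau_M<T_{-1}$ (the walker has not yet reached $-1$), this is immediate from the strong Markov property: the original walker must hit $-1$ after $\tau_M$ a.s., so the fresh walk in $\cookieenv^{\tau_M}$ hits $-1$ a.s. The delicate case is $\tau_M>T_{-1}$ (the walker has already reached $-1$), where the hypothesis does not directly transfer. Here ellipticity must be used in an essential way---together with Lemma~\ref{lem:NotGettingStuck} and the fact that an infinite right excursion in $\cookieenv^{\tau_M}$ would, via another application of strong Markov and shifting, amount to escaping to $+\infty$ from $0$ without ever crossing $-1$ in a still-elliptic environment. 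I expect this closure argument to be where ellipticity of each cookie is genuinely used, and it is the step that requires the most care to make rigorous.
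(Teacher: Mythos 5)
Your skeleton (disjoint decomposition by the number $M$ of visits to $0$ before the putatively infinite excursion, base case $M=0$ from $E_0\subseteq\{T_{-1}=\infty\}$, strong Markov at $\tau_M$) matches the paper's induction on the excursion index, and you have correctly located the crux: transferring the hypothesis past the time the walk has already visited the negative half-line. But the proposal stops exactly there, and the closure you sketch does not work as stated. Knowing that $\cookieenv^{\tau_M}$ is ``still elliptic'' and that an infinite excursion would mean escaping to $+\infty$ from $0$ without crossing $-1$ is not a contradiction with anything: the hypothesis $\PP_{\cookieenv,0}(T_{-1}<\infty)=1$ concerns the original environment $\cookieenv$, not the residual one, and there are plenty of elliptic environments in which the walk escapes to $+\infty$ without hitting $-1$. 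So the statement you would need, $\PP_{\cookieenv^{\tau_M},0}(E_0)=0$, is precisely what remains unproved, and no amount of reapplying Lemma~\ref{lem:NotGettingStuck} or the strong Markov property supplies it.

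The missing idea is the finite-modification argument the paper uses. Two observations close the gap. First, whether the next right excursion from $0$ is finite depends only on the arrows at sites $x\ge 1$ (together with how many arrows at each such site have already been consumed); the residual environment on the negative half-line is irrelevant. Second, the consumption state of the arrows at sites $x\ge 1$ at time $\tau_M$ can be reproduced by an alternative finite past in which the walk never visits the negative integers and its visits to $0$ are concatenated in time (it performs the same finite sequence of moves at positive sites, with the excursions to the left deleted). By ellipticity of $\cookieenv$, this alternative finite sequence of moves has positive $\PP_{\cookieenv,0}$-probability. Conditioning on a positive-probability event preserves the almost sure event $\{T_{-1}<\infty\}$, and on that conditioned event an infinite right excursion immediately afterwards would force $T_{-1}=\infty$; hence the conditional probability that the next excursion is infinite is $0$. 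Since this conditional probability equals $\PP_{\cookieenv,0}(E_M\mid\mathcal{F}_{\tau_M})$ on $\{\tau_M<\infty\}$ (both depend only on the identically-consumed positive-side arrows), you get $\PP_{\cookieenv,0}(E_M)=0$ for every $M$, which is the sum you wanted to take. Without this step the proof is incomplete.
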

    \begin{proof}
    The proof uses a finite modification argument which is standard (see \cite{kosygina2008positively} proof of Lemma 8, or \cite{kosygina2012excited} (3.2) and Figure 1.\ there).
    For convenience we shall supply a sketch. We will prove that the $i$-th right excursion is a.s.\ finite by induction on $i$. For $i=0$ this is trivial.
    Assume now that the first $i$ right excursions are a.s.\ finite and consider the past including the first step of the $(i+1)$-st excursion.
    The event that the last excursion is finite depends only on what the walk has done in places $x>0$. Therefore, the probability that the $(i+1)$-st excursion is finite given that the past does not change when we modify parts of the past as long as we do not change the parts when $x>0$.
    In particular it remains the same when we erase all visits to the negative integers and visits to zero are concatenated in time (simply by replacing enough of the first arrows above $0$ to be right arrows).
     As the modified event has positive probability, conditioning on it, the probability for finiteness of the $(i+1)$-st excursion equals the probability that the first excursion is finite conditioned on making a pre-given sequence of first steps on the positive half line.
     This equals $1$ by the assumption of the lemma since by ellipticity of $\cookieenv$ there is a positive probability to make any pre-given finite sequence of moves.
    \end{proof}

    \begin{cor}\label{cor:TransVsExtinctions}[\cite[Lemma 3.3]{kosygina2012excited}, \cite[Corollary 3.7]{Amir2013excited}] Let $\mu$ be an elliptic and non-degenerate probability measure over cookie environments.
    $\PP_0(T_{-1}=\infty ) > 0$ if and only if $\PP_0(A^+) > 0$
    \end{cor}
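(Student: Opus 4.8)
The plan is to prove the two implications separately; only one of them needs real work.

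One direction, $\PP_0(T_{-1}=\infty)>0\ \Rightarrow\ \PP_0(A^+)>0$, is essentially free. Since $\mu$ is non-degenerate, $\PP_0$-almost every arrow environment lies in $A$, and for such an environment the observation recorded just before Remark~\ref{rem:ComparingCrossings} tells us that $\{T_{-1}=\infty\}$ forces $\lim_{n\to\infty}X_n=+\infty$. Hence $\{T_{-1}=\infty\}\subseteq A^+$ up to a $\PP_0$-null set, and so $\PP_0(A^+)\ge\PP_0(T_{-1}=\infty)>0$.

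For the converse, $\PP_0(A^+)>0\ \Rightarrow\ \PP_0(T_{-1}=\infty)>0$, I would first isolate the elementary fact that on $A^+$ there is almost surely an infinite right excursion from $0$: on $A^+$ we have $X_n\to+\infty$, so the walk visits $0$ only finitely many times, and if $s$ denotes the time of the last visit to $0$ then the step at time $s+1$ must be to the right (a step to $-1$ would force a later return to $0$ in order to escape to $+\infty$) and $X_t\ge 1$ for all $t>s$, so $(X_s,X_{s+1},\dots)$ is an infinite right excursion. Consequently $\PP_0(A^+)>0$ implies that with positive $\PP_0$-probability some right excursion is infinite. Now suppose for contradiction that $\PP_0(T_{-1}=\infty)=0$, i.e.\ $\PP_0(T_{-1}<\infty)=1$; then averaging $\PP_0(T_{-1}<\infty)=\int\PP_{\cookieenv,0}(T_{-1}<\infty)\,d\mu(\cookieenv)=1$ forces $\PP_{\cookieenv,0}(T_{-1}<\infty)=1$ for $\mu$-a.e.\ $\cookieenv$, and since $\mu$ is elliptic $\mu$-a.e.\ such $\cookieenv$ is elliptic, so Lemma~\ref{lem:TisFiniteExcursionsAreFinite} yields that all right excursions are $\PP_{\cookieenv,0}$-a.s.\ finite; integrating back over $\cookieenv$ contradicts the previous sentence.

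I do not expect a genuine obstacle: the substance is already packed into Lemma~\ref{lem:TisFiniteExcursionsAreFinite} and, behind it, the finite-modification argument. The only points needing a little care are the quenched/annealed passage in the last step (a routine Fubini argument, using that the integrand is bounded by $1$) and checking that the ``last visit to $0$'' on $A^+$ is well defined, which is immediate from $X_n\to+\infty$.
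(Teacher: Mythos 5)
Your proof is correct and follows essentially the same route as the paper: the easy direction uses non-degeneracy to force $\{T_{-1}=\infty\}\subseteq A^+$ (the paper invokes Lemma~\ref{lem:NotGettingStuck}, you invoke the equivalent observation preceding Remark~\ref{rem:ComparingCrossings}), and the converse is the identical quenched reduction to Lemma~\ref{lem:TisFiniteExcursionsAreFinite}. You merely spell out, a bit more explicitly than the paper, why $A^+$ entails an infinite right excursion, a step the paper compresses into the assertion that $X_n\nrightarrow+\infty$.
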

    \begin{proof}
    Note that if $T_{-1}=\infty$, then by Lemma \ref{lem:NotGettingStuck} $\liminf X_n=+\infty$ which yields $A^+$.
    For the other implication, assume $\PP_0(T_{-1}<\infty ) = 1$, then $\PP_{\cookieenv,0}(T_{-1}<\infty ) = 1$ for $\mu$-a.e.\ $\cookieenv\in \Om$. By Lemma \ref{lem:TisFiniteExcursionsAreFinite} $\PP_{\cookieenv,0}$-a.s.\ all right excursions are finite and in particular $\PP_{\cookieenv,o}$-a.s.\ $X_n \nrightarrow +\infty$ for $\mu$-a.e.\ $\cookieenv\in \Om$. In other words, in this case $\PP_0(A^+) = 0$.
    \end{proof}

    \begin{proof}[Proof of Theorem \ref{thm:KalikowType}] First note that in the case where $\mu$ is not non-degenerate, the conclusion of the theorem follows from Corollary \ref{cor:ellReduction}. For the rest of the proof we assume that $\mu$ is also non-degenerate.
    If $\PP_0 (T_{-1} = \infty) > 0$ (resp.\ $\PP_0 (T_{1} = \infty) > 0$) then by Lemma \ref{lem:TisInfiniteFinitelyManyExcursions} there are $\PP_0$-a.s.\ only finitely many right (resp.\ left) excursions.
    In particular $\PP_0$-a.s.\ the walk visits $0$ only finitely many times from the right (resp.\ left).
      By the assumption, for every $m$
      \[ \PP_0\left[\prod_{i=m}^\infty (1-\cookieenv(0,i))=0\right]=1 \ \ \left(\mbox{resp. } \PP_0\left[\prod_{i=m}^\infty \cookieenv(0,i)=0\right]=1\right) \]
      so we have $\PP_0$-a.s.\ only finitely many visits to zero, which implies the occurrence of the event $A^+\cup A^-$.

    On the other hand,
    if $\mathbb{P}_0 (T_{-1}<\infty)=1$ and $\mathbb{P}_0 (T_{1}<\infty)=1$ then by Lemma \ref{lem:TisFiniteExcursionsAreFinite} the walk is $\PP_0$-a.s.\ not transient to the right and not transient to the left. This means it is $\PP_0$-a.s. recurrent.
   \end{proof}

    For $y,n\in\Z_+$ and $B\subset\Z_+$ denote by $\PP^y(Z_n^+ \in B)$ the probability that the process $Z^+$ with initial value $y$ satisfies $Z_n^+\in B$. $\PP^y(Z_n^-\in B)$ is defined similarly.

    Let $S_+$ and $S_-$ be the events that $\{Z_n^+ >0 \text{ for all } n\}$ and $\{Z_n^- >0 \text{ for all } n\}$, respectively.

    \begin{cor}\label{cor:Reduction}
      Let $\mu$ be a stationary ergodic, elliptic and non-degenerate probability measure over cookie environments. $\PP_0(A^+)=1$ if and only if $\PP^1(S_+)>0$ and $\PP^1(S_-)=0$.
    \end{cor}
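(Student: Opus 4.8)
The plan is to translate the statement about $Z^+$ and $Z^-$ back into one about the hitting times $T_{-1}$ and $T_1$, and then combine Corollary~\ref{cor:TransVsExtinctions} with the Kalikow-type $0$-$1$ law (Theorem~\ref{thm:KalikowType}). The first step is to record the dictionary: sampling an arrow environment from $\nu$ and running the deterministic walk on it reproduces $\PP_0$ (the Fact following Lemma~\ref{lem:arrow_ergod}), and by Theorem~\ref{thm:SurvivalEquivNonReturning} the event $S_+$ (for the process $Z^+$ with $Z^+_0=1$) coincides with $\{T_{-1}=\infty\}$, while $S_-$ (for $Z^-$ with $Z^-_0=1$) coincides with $\{T_1=\infty\}$. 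Hence $\PP^1(S_+)=\PP_0(T_{-1}=\infty)$ and $\PP^1(S_-)=\PP_0(T_1=\infty)$, so it suffices to prove that $\PP_0(A^+)=1$ if and only if $\PP_0(T_{-1}=\infty)>0$ and $\PP_0(T_1=\infty)=0$.

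Next I would observe that the reflection $\cookieenv(x,i)\mapsto 1-\cookieenv(-x,i)$ pushes $\mu$ forward to a measure which is again stationary ergodic, elliptic and non-degenerate, and which interchanges the roles of $A^+$ and $A^-$ and of $T_{-1}$ and $T_1$. Applying Corollary~\ref{cor:TransVsExtinctions} to this reflected measure therefore yields the mirrored equivalence $\PP_0(T_1=\infty)>0 \iff \PP_0(A^-)>0$, which I will use alongside the original statement $\PP_0(T_{-1}=\infty)>0 \iff \PP_0(A^+)>0$.

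With these in hand the two directions are short. If $\PP_0(A^+)=1$, then $\PP_0(A^+)>0$ gives $\PP^1(S_+)=\PP_0(T_{-1}=\infty)>0$, while disjointness of $A^+$ and $A^-$ gives $\PP_0(A^-)=0$, hence, by the mirrored corollary, $\PP^1(S_-)=\PP_0(T_1=\infty)=0$. Conversely, if $\PP^1(S_+)>0$ and $\PP^1(S_-)=0$, then $\PP_0(A^+)>0$ and $\PP_0(A^-)=0$; by Theorem~\ref{thm:KalikowType}, $\PP_0(A^+\cup A^-)\in\{0,1\}$, and since this probability is at least $\PP_0(A^+)>0$ it equals $1$; as $A^+$ and $A^-$ are disjoint and $\PP_0(A^-)=0$, we conclude $\PP_0(A^+)=1$. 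The argument is essentially bookkeeping; the only points needing care are the identification $\PP^1(S_+)=\PP_0(T_{-1}=\infty)$ --- one must check that the two events depend on the arrow environment through the same data, namely its restriction to $[0,\infty)\times\N$ --- and the verification that the reflected measure still satisfies the hypotheses of Corollary~\ref{cor:TransVsExtinctions}, so that its mirrored form is legitimate.
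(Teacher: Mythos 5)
Your argument is correct and is essentially the same as the paper's: both directions rest on the pairing of Theorem~\ref{thm:SurvivalEquivNonReturning} (translating $S_\pm$ into $\{T_{\mp 1}=\infty\}$) with Corollary~\ref{cor:TransVsExtinctions} and its left--right mirror image, finishing the converse with the Kalikow-type 0-1 law. The only cosmetic difference is that you make the reflection symmetry and the disjointness of $A^+$ and $A^-$ explicit, whereas the paper invokes the mirrored corollary silently and runs the forward direction as a short proof by contradiction.
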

    \begin{proof}
    If $\PP_0(A^+)=1$ then by
    Corollary \ref{cor:TransVsExtinctions} $\PP_0(T_{-1}=\infty ) > 0$ and by Theorem \ref{thm:SurvivalEquivNonReturning} also $\PP^1(S_+)>0$.
    Assume for contradiction $\PP^1(S_-)>0$, then by Theorem \ref{thm:SurvivalEquivNonReturning} also $\PP_0(T_{1}=\infty ) > 0$,
    and so by Corollary \ref{cor:TransVsExtinctions} also $\PP_0(A^-)>0$. This contradicts the assumption.

    For the other direction, again by Theorem \ref{thm:SurvivalEquivNonReturning} and Corollary \ref{cor:TransVsExtinctions} we know that $\PP_0(A^+)>0$ and $\PP_0(A^-)=0$.
    By Theorem \ref{thm:KalikowType}, we have $\PP_0(A^+\cup A^-)=1$, and so $\PP_0(A^+)=1$.
    \end{proof}

    \section{Proof of the main result}\label{sec:pfmain}
    This section is devoted to the proof of Theorem \ref{thm:directionalZeroOneLaw}. The following is a key proposition.
    \begin{prop}\label{prop:main}
    Assume that $\mu$ is a stationary ergodic and elliptic probability measure over cookie environments. If $\PP^1(S_+)>0$, then $\PP^1(S_-)=0$.
    \end{prop}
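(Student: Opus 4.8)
The plan is to exploit the Subduality Lemma (Lemma \ref{lem:subduality}) together with the ergodicity of the arrow environment to force a contradiction if both $\PP^1(S_+)>0$ and $\PP^1(S_-)>0$. The intuition is that $S_+$ requires $Z^+$ to survive forever starting from $1$, which (by monotonicity in the initial value, Observation \ref{obs:propsOfU}\eqref{itm:UNondec}) means survival from any $y\ge 1$. Subduality then says: if $Z^+$ started at $1$ has reached a large value $y$ by time $l$, then on the shifted environment $\theta^{l-1}a$ the process $Z^-$ started at $y$ is back down to at most $1$ by time $l$ — i.e.\ $Z^-$ has been "squeezed'' and is in danger of extinction. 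So survival of $Z^+$ pushes the co-ordinate up while simultaneously (on a shifted environment) pushing $Z^-$ down, and if survival of $Z^+$ has positive probability on a "large'' (positive-density) set of environments, then extinction of $Z^-$ should be forced almost everywhere, contradicting $\PP^1(S_-)>0$.

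First I would reduce to the non-degenerate case: if $\mu$ is not non-degenerate, Corollary \ref{cor:ellReduction} already gives the directional $0$-$1$ law, so there is nothing to prove; hence assume $\mu$ (equivalently $\nu$) is non-degenerate, so $a\in A$ $\nu$-a.s.\ and the processes $Z^\pm$ are well-defined. Next, since $\PP^1(S_+)>0$, by Theorem \ref{thm:SurvivalEquivNonReturning} and Corollary \ref{cor:TransVsExtinctions} we have $\PP_0(A^+)>0$, hence $\PP_0(T_{-1}=\infty)>0$, and by Lemma \ref{le:inf_opt_reg} there are infinitely many optional regeneration positions — equivalently the set $G:=\{m\ge 0 : Z^+ \text{ started from } 1 \text{ at coordinate } m \text{ survives}\}$ has positive density $p:=\PP^1(S_+)>0$ by the ergodic theorem applied to $\nu$. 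The heart of the argument is then: pick $l$ so that $\PP^1(Z^+_l\ge y_0)$ is close to $p$ for $y_0$ large (possible since on $S_+$, $Z^+_l\to\infty$ as $l\to\infty$... actually more carefully: on $S_+$ we have $Z^+_l\ge 1$ for all $l$, and one shows $Z^+_l\to\infty$ a.s.\ on $S_+$ using non-degeneracy, or one just needs $Z^+_l\ge y_0$ with probability bounded below). On the event $\{Z^+_l\ge y_0\}$ for the environment $a$, Subduality gives that $Z^-$ on $\theta^{l-1}a$ started from $y_0$ has $Z^-_l\le 1$. Then by monotonicity, $Z^-$ on $\theta^{l-1}a$ started from $1$ also has $Z^-_l\le 1$, so after $l$ steps it is as if we restart $Z^-$ from value $\le 1$ on a further-shifted environment; chaining such "squeezing'' events along the positive-density set of good positions and using independence/ergodicity should force $Z^-_n = 0$ eventually, a.s.\ — i.e.\ $\PP^1(S_-)=0$.

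To make the chaining rigorous I would argue along the sequence of shifts: define for each $k$ the event $E_k$ that, on the environment $\theta^{k l}a$ (or rather the appropriately indexed block of columns $[kl, (k+1)l)$), one has $Z^+_l \ge y_0$ when $Z^+$ is started from $1$; by stationarity of $\nu$ each $E_k$ has probability $\ge p-\varepsilon$, and Subduality converts $E_k$ into the statement that $Z^-$ restarted from any value (in particular from its current value, using monotonicity to cap it) drops to $\le 1$ across that block. Crucially one must check that the $Z^-$ process, read off the environment in the order of decreasing coordinate, interacts correctly with these blocks — this is where I expect the main obstacle: the $Z^+$ events are naturally indexed by increasing coordinate while $Z^-$ reads the environment in decreasing coordinate, so the "same'' randomness is being used in two opposite orders, and one has to set up the coupling on the arrow environment $a$ itself (not on an auxiliary product space) so that Subduality can be applied on each block and the conclusions composed. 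The cleanest route is probably: show that $\PP^1(S_-\mid Z^+ \text{ started from }1\text{ survives on } \theta^{-?}\dots) $... — concretely, show $S_-$ on environment $a$ and $S_+$ on environment $\theta^{-1}\bar a$-type reflections are incompatible on a positive-density set, then invoke ergodicity to kill $S_-$ entirely. The remaining steps (monotonicity caps, the ergodic-theorem density statement, passing from "infinitely many squeezes'' to "extinction'') are routine given Observation \ref{obs:propsOfU}, Lemma \ref{lem:subduality}, and Lemma \ref{lem:arrow_ergod}.
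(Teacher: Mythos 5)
Your high-level strategy (subduality of $Z^+$ and $Z^-$, ergodicity of the arrow environment, iterated ``squeezing'' of $Z^-$) is the same as the paper's, but two of the hard technical steps are hand-waved in a way that actually matters. First, you want to ``pick $l$ so that $\PP^1(Z^+_l\ge y_0)$ is close to $p$'' and suggest this follows because ``$Z^+_l\to\infty$ a.s.\ on $S_+$.'' Under \emph{uniform} ellipticity this is indeed elementary (the paper itself says so in Remark~\ref{rem:easyCases}), but under mere ellipticity it is precisely where the difficulty lies: a priori $Z^+$ could dwell near small values forever. The paper resolves this with Lemma~\ref{lem:upperdens0}, a genuinely nontrivial combination of ergodicity (the set of columns where $\prod_{i\le k}(1-\cookieenv(x,i))>\gamma$ has density $g(\gamma)\to1$) with a bounded-martingale argument ($M_n=\PP[S_+^c\mid\mathcal{F}_n]\to\one_{S_+^c}$), showing the upper density of $\{n:Z^+_n<k\}$ is $0$ on $S_+$. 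Nothing in your sketch replaces this. Second, your target is probability $\approx p$ rather than probability $\approx 1$: the paper first \emph{raises the initial value} (Lemma~\ref{lem:getting close to one}, via the regeneration-position argument and a union bound over finitely many columns) so that $\PP^y[S_+]>1-\epsilon$, and then obtains $\PP^y[Z^+_l>k]>1-2\epsilon$. This near-$1$ bound is what makes the subsequent chaining work; with only probability $\approx p<1$ at each block, ``should force $Z^-_n=0$ a.s.'' does not follow.

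Your concluding step also has a gap: even after subduality gives infinitely many indices $n_i$ at which $\PP^1[Z^-_{n_i}\le y]$ is close to $1$, one still must convert ``infinitely many good chances'' into ``$Z^-$ a.s.\ dies.'' In a general stationary ergodic (non-i.i.d., non-Markov) environment there is no independence to invoke, so you cannot just multiply survival probabilities. The paper handles this with a second martingale argument in the proof of Proposition~\ref{prop:main}: on the event $D$ that infinitely often $Z^-_{n_i}\le y$ and the column at $-n_i$ is $\gamma$-good, the martingale $\PP[S_-^c\mid\mathcal{F}_n]$ is $\ge\gamma$ infinitely often, hence its a.s.\ limit $\one_{S_-^c}$ is positive, i.e.\ $S_-^c$ occurs; and $\PP^1[D]\ge 1-\delta$ by reverse Fatou. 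Without these two martingale arguments and the initial-value boosting, your outline does not close. Finally, your worry about ``$Z^+$ and $Z^-$ reading in opposite orders'' is a non-issue: the Subduality Lemma is formulated exactly so that $Z^-$ on the shifted environment $\theta^{l-1}a$ reads the same columns as $Z^+$ on $a$, just in reverse; Corollary~\ref{cor:ShiftInvarianceOfTimeTrick} plus stationarity makes the bookkeeping transparent.
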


    We shall first prove Theorem \ref{thm:directionalZeroOneLaw} assuming Proposition \ref{prop:main}, and then turn to proving Proposition \ref{prop:main}.

    \begin{proof}[Proof of Theorem \ref{thm:directionalZeroOneLaw}]
    By Corollary \ref{cor:ellReduction} we may assume that $\mu$ is also non-degenerate. If $\PP^1(S_+)>0$ then by Proposition \ref{prop:main}
    $\PP^1(S_-)=0$ and therefore by Corollary \ref{cor:Reduction} $\PP_0(A^+)=1$. Symmetrically, if $\PP^1(S_-)>0$ then $\PP_0(A^-)=1$. To deal with the last case, namely that $\PP^1(S_-\cup S_+)=0$, note that  Corollary \ref{cor:TransVsExtinctions} implies that $\PP_0(A^+\cup A^-)=0$. Since $\mu$ is non-degenerate, then by Lemma \ref{lem:NotGettingStuck} it holds that $\PP_0(X_n=0\text{ i.o.})=1$. This completes the proof of the theorem.
    \end{proof}

    \begin{rem}\label{rem:easyCases}
  In some specific cases, e.g. when $\mu$ is uniformly elliptic and stationary ergodic or when it is i.i.d.\ and elliptic, there are case specific proofs of Proposition \ref{prop:main}
  which are significantly simpler than the one provided below for the general case.
  For example, in the i.i.d. case both $Z^+$ and $Z^-$ are Markov chains, and thus it suffices to show that there exists $M$ such that  $\{\PP^k(\exists n\, Z^-_n<M):k=1,2,3,\ldots\}$ is bounded away from zero. This, however, follows
  immediately from Lemma \ref{lem:getting close to one} and subduality (Lemma \ref{lem:subduality}).
  If, on the other hand, we assume uniform ellipticity (but no longer i.i.d.), instead of the density statement of Lemma \ref{lem:upperdens0} one can easily show that $Z_n^+$ goes to infinity,
  which in turn allows us to immediately use Lemma \ref{lem:subduality} to finnish the proof.

    \end{rem}

   We now prove Proposition \ref{prop:main}. We shall divide the proof into several steps. To the end of the paper we assume that all the assumptions of Proposition \ref{prop:main} hold.

    \begin{lem}\label{lem:getting close to one}
    For every $\epsilon>0$ there is some $y\in\Z_+$ so that $\PP^y[S_+]>1-\epsilon$.
    \end{lem}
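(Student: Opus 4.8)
The plan is to start from the hypothesis $\PP^1(S_+)>0$ and \emph{amplify} the survival probability by raising the initial value $y$. The intuition is that the process $Z^+$ with initial value $y$ should, at least initially, behave roughly like $y$ independent-ish copies of the process started from $1$ (or, more precisely, the event $\{Z^+_n>0 \text{ for all } n\}$ becomes easier to achieve the larger the starting population, because $U^+_b$ is nondecreasing in $x$ by Observation \ref{obs:propsOfU}(\ref{itm:UNondec})), so that $\PP^y(S_+)\to 1$ as $y\to\infty$.

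\medskip

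\noindent\textbf{Step 1: a monotone coupling / domination.} Using the monotonicity of $U^+_b$ in $x$ (Observation \ref{obs:propsOfU}(\ref{itm:UNondec})), I would couple the processes $Z^+$ started from different initial values on the \emph{same} arrow environment $a$ so that a larger initial value gives a pointwise-larger process: if $Z^+_0=y\ge y'=Z'^+_0$ then $Z^+_n\ge Z'^+_n$ for all $n$ (this follows immediately by induction using that $Z^+_n=U^+_{a(n-1,\cdot)}(Z^+_{n-1})$ and $U^+_{a(n-1,\cdot)}$ is nondecreasing). In particular $\PP^y(S_+)$ is nondecreasing in $y$, so the limit $q:=\lim_{y\to\infty}\PP^y(S_+)$ exists; the goal is to show $q=1$.

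\medskip

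\noindent\textbf{Step 2: reduce to the recursive structure of arrow environments.} Recall (Lemma \ref{lem:arrow_ergod}) that the induced arrow-environment measure $\nu$ is stationary ergodic under the shift $\theta$. Condition on the first column $a(0,\cdot)$: given $Z^+_0=y$, the value $Z^+_1=U^+_{a(0,\cdot)}(y)$ is determined, and then $(Z^+_{n+1})_{n\ge0}$ is a copy of the process started from $Z^+_1$ on the shifted environment $\theta^1 a$, whose law is again $\nu$. Hence, writing $f(y):=\PP^y(S_+)$, one gets a relation of the form $f(y)=\E_\nu\!\big[f\big(U^+_{a(0,\cdot)}(y)\big)\,;\,U^+_{a(0,\cdot)}(y)\ge 1\big]$. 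The key is to show that, for $y$ large, $U^+_{a(0,\cdot)}(y)$ is itself large with high probability — i.e.\ the ``one-step'' map does not kill too much mass — and then iterate. A clean way to see $U^+_{a(0,\cdot)}(y)\to\infty$ in probability as $y\to\infty$: since $\nu$ is non-degenerate (each column has infinitely many $0$'s and $1$'s), for any fixed threshold $N$ the event that among the first several entries of $a(0,\cdot)$ there are at least $N$ ones before the $y$-th zero has $\nu$-probability tending to $1$ as $y\to\infty$ (the number of $1$'s before the $y$-th $0$ is a sum of $y$ nonnegative terms that is a.s.\ finite but grows with $y$). More carefully, $U^+_{a(0,\cdot)}(y)$ is nondecreasing in $y$ and $\nu$-a.s.\ finite for each $y$, and it is not eventually constant (non-degeneracy), so $U^+_{a(0,\cdot)}(y)\uparrow\infty$ $\nu$-a.s.

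\medskip

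\noindent\textbf{Step 3: close the argument via a $0$-$1$-type dichotomy for $q$.} From Step 2, for any $\epsilon>0$ choose $N$ with $f(N)\ge q-\epsilon$; then choose $y_0$ so large that $\PP(U^+_{a(0,\cdot)}(y)\ge N)\ge 1-\epsilon$ for all $y\ge y_0$. For such $y$,
\[
f(y)\;\ge\;\E_\nu\!\big[f\big(U^+_{a(0,\cdot)}(y)\big)\,;\,U^+_{a(0,\cdot)}(y)\ge N\big]\;\ge\;(1-\epsilon)(q-\epsilon),
\]
using monotonicity of $f$ in Step 1. Letting $y\to\infty$ gives $q\ge(1-\epsilon)(q-\epsilon)$ for every $\epsilon>0$, which forces $q\ge 1$ once we know $q>0$ — and $q>0$ because $q\ge f(1)=\PP^1(S_+)>0$ by hypothesis. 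Hence $q=1$, which is exactly the assertion of the lemma: for every $\epsilon>0$ there is $y$ with $\PP^y(S_+)>1-\epsilon$.

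\medskip

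\noindent\textbf{Main obstacle.} The delicate point is Step 2: controlling $U^+_{a(0,\cdot)}(y)$ uniformly. In the i.i.d.\ or uniformly elliptic regimes this is transparent (cf.\ Remark \ref{rem:easyCases}), but under mere stationarity--ergodicity one must argue purely via non-degeneracy of $\nu$ that $U^+_{a(0,\cdot)}(y)\to\infty$ without any quantitative rate — and make sure the conditioning/shift bookkeeping (the conditional law of the tail environment given the first column is again $\nu$, or at least has the right marginal for the iteration) is legitimate. I expect the authors handle this by the same monotonicity-plus-ergodicity package rather than any heavy machinery.
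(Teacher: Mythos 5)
Your Step~1 (monotone coupling, giving that $\PP^y(S_+)$ is nondecreasing in $y$ and $q:=\lim_{y\to\infty}\PP^y(S_+)$ exists) is correct, and is also implicit in the paper's argument. But Steps 2 and 3 do not close, for two separate reasons.

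First, the algebra in Step 3 is vacuous. From $f(y)\ge(1-\epsilon)(q-\epsilon)$ for all $y\ge y_0(\epsilon)$, letting $y\to\infty$ gives $q\ge(1-\epsilon)(q-\epsilon)$. This inequality rearranges to $\epsilon q\ge -(1-\epsilon)\epsilon$, which holds for \emph{every} $q\in[0,1]$ and every $\epsilon\in(0,1)$; for instance $q=1/2$ satisfies it. It does not force $q=1$. A one-step bound that merely ``almost preserves $q$'' cannot be iterated into $q=1$; you would need the one-step map to strictly improve the bound, and it doesn't.

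Second (and you flagged this yourself), the displayed recursion $f(y)=\E_\nu\big[f(U^+_{a(0,\cdot)}(y));\,U^+_{a(0,\cdot)}(y)\ge1\big]$ is only valid when the first column $a(0,\cdot)$ is independent of the shifted environment $\theta^1 a$, i.e.\ in the i.i.d.\ case. Under mere stationarity the conditional law of $\theta^1 a$ given $a(0,\cdot)$ is not $\nu$, so this identity fails and there is no clean one-step formula to iterate.

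The ingredient you are missing is the one the paper leans on: \emph{optional regeneration positions}. Since $\PP^1(S_+)>0$, Theorem~\ref{thm:SurvivalEquivNonReturning} gives $\PP_0(T_{-1}=\infty)>0$, and then Lemma~\ref{le:inf_opt_reg} gives $\PP_0$-a.s.\ infinitely many optional regeneration positions; in particular the first one, $\phi$, is a.s.\ finite. The crucial observation is that survival up to level $\phi$ already forces survival forever: if $Z^+_\phi\ge1$, then since the shifted environment $\theta^\phi a$ supports survival from initial value $1$ (again Theorem~\ref{thm:SurvivalEquivNonReturning}), your monotone coupling gives $Z^+_n\ge1$ for all $n\ge\phi$. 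This converts the problem into a \emph{finite-horizon} one: choose $m$ with $\PP_0(\phi>m)<\epsilon/2$, then work backwards from $k_m=1$, picking each $k_{j-1}$ so large that $\PP\big(U^+_{a(j-1,\cdot)}(k_{j-1})<k_j\big)<\epsilon/(2m)$ (possible precisely because, as you observed in Step~2, $U^+_b(x)\uparrow\infty$ for non-degenerate $b$), and conclude with a union bound over the $m$ steps. So your Step~2 fact is exactly the right tool, but it must be applied over a random finite horizon cut off at $\phi$, not iterated indefinitely.
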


    \begin{proof}
    $Z_n^+>0$ for all $n\ge 0$ if and only if $T_{-1}=\infty$, and so as in Lemma \ref{le:inf_opt_reg}, by stationarity and ergodicity of the environment
    there are a.s.\ infinitely many optional regeneration positions.
    In particular there is an a.s. finite (random) first optional regeneration position $\phi>0$. Consider now the process $Z^+$ at time $\phi$. If $Z^+$ is positive at time $\phi$ then it will stay positive forever. Fix $\epsilon>0$. Let $m$ be a large enough number so that $\PP_0(\phi> m)<\frac{\epsilon}{2}$. Set $k_m=1$ and sequentially choose sufficiently large $k_{m-1},\ldots,k_0\in\N$ so that $\PP(U_{a(j-1,\cdot)}(k_{j-1})< k_j)<\frac{\epsilon}{2m}$ for all $1\leq j\leq m$. Setting $y=k_0$, we have $$\PP^y[S_+]\ge \PP^y[Z^+_\phi\ge 1]\ge \PP ( U_{a(i-1,\cdot)}(k_{i-1}) \ge k_{i},i=1,...,m,\phi\leq m ) \ge 1-\epsilon $$ by union bound.
    \end{proof}

    For a set $B\subset\Z_+$ denote by $\dens(B)$ the upper density of $B$, that is $$\dens(B)=\limsup_{n\to\infty}\frac{\#\{j\in B:j<n\}}{n}.$$
    \begin{lem}\label{lem:upperdens0}
    For every initial $y\ge 1$ it holds that $\PP^y\big[\dens(\{n:Z^+_n<k\})=0\ |\ S_+\big]=1$ for every $k\ge 0$.
    \end{lem}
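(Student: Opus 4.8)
\medskip

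The plan is to use the ergodic theorem for the shift $\theta$ acting on the arrow environment, combined with the regeneration structure already established in the previous lemmas. First I would fix $k\ge 1$ (the case $k=0$ is trivial since $Z^+$ is integer-valued and $S_+$ forces $Z_n^+\ge 1$). The key observation is that on the event $S_+$, every position $m$ with $Z_m^+\ge 1$ has the property that the walk started at $m$ never returns to $m-1$ --- indeed, by Lemma \ref{lem:comparingZandW} and Theorem \ref{thm:SurvivalEquivNonReturning}, $S_+$ is exactly the event $\{T_{-1}=\infty\}$, and on this event $Z_m^+ = W_m = 1 + (\text{crossings of }\{m-1,m\})$, so every $m$ is an optional regeneration position. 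Thus on $S_+$ the quantity $Z_m^+$ depends, in a translation-covariant way, on the arrow environment: writing $Z^+(a,y)$ for the process on environment $a$ with initial value $y$, one has $Z_{m+j}^+(a,y) = Z_j^+(\theta^m a,\, Z_m^+(a,y))$ for all $j\ge 0$.

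\medskip

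The main step is to translate "$Z^+_n<k$ has zero upper density" into an ergodic-averaging statement. I would argue as follows. Condition on $S_+$. Run the chain $Z^+$ from initial value $y$; by Lemma \ref{le:inf_opt_reg} and the argument of Lemma \ref{lem:getting close to one}, after an a.s.\ finite time $\phi$ the process is at an optional regeneration position and stays $\ge 1$. Now I want to bound the density of the (random) set $D_k=\{n: Z_n^+<k\}$. The idea is a renewal/ergodic argument: consider the stopping times at which $Z^+$ enters $[1,k)$; between consecutive such times, there is a "fresh" arrow environment (by the regeneration property, the environment past the current position is independent of the past and has law $\nu$ shifted appropriately --- or at least is stationary), and by Lemma \ref{lem:getting close to one} the probability that, starting from any value in $[1,k)$, the process climbs above $k$ and never comes back before the next regeneration is bounded below by a positive constant $\delta=\delta(k)$ not depending on the current value (here one uses monotonicity, Observation \ref{obs:propsOfU}\ref{itm:UNondec}: starting higher only helps). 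Hence the number of returns to $[1,k)$ up to time $n$ is stochastically dominated by a sum of geometric-type random variables, and the gaps between returns have infinite mean --- more precisely, each "excursion above $k$" that does survive forever removes all future contributions. Combining, $\#(D_k\cap[0,n))/n \to 0$ a.s.

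\medskip

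To make this rigorous without fussing over independence (since $\mu$ is only stationary ergodic, not i.i.d.), I would instead phrase it via the ergodic theorem directly. Define the event $G_y = S_+ = \{Z^+(\cdot,y)\text{ survives}\}$ on the space of arrow environments, and note $G_y$ is shift-monotone in a suitable sense. The cleanest route: by the ergodic theorem applied to $\nu$ (Lemma \ref{lem:arrow_ergod}), the density $\dens(\{m: m\text{ is an optional regeneration position with } T_{m-1}=\infty \text{ and the environment looks good}\})$ converges a.s.\ to a deterministic limit $p=\PP_0(T_{-1}=\infty)>0$. On $S_+$, between consecutive optional regeneration positions $m_i<m_{i+1}$ with $m_{i+1}-m_i$ having finite expectation (again ergodic theorem), the value $Z^+$ can dip below $k$ only finitely-often-per-block in expectation; more usefully, I claim that on $S_+$ the set $\{n: Z^+_n \ge k\}$ has density $1$ because $Z^+_n\to\infty$ along the regeneration positions. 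Actually the sharpest formulation is: it suffices to show $\frac1n\sum_{j=0}^{n-1}\mathbf 1_{\{Z_j^+<k\}}\to 0$ a.s.\ on $S_+$, and since $Z^+$ restricted to $S_+$ is (conditionally) a stationary-environment process that is transient to $+\infty$ (each regeneration position is a renewal of the environment, and there are infinitely many with positive density), standard renewal-ergodic reasoning gives that the fraction of time spent in any fixed compact set $[1,k)$ is zero.

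\medskip

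The hard part will be handling the lack of independence: since $\mu$ is merely stationary ergodic, the environment beyond a regeneration position is \emph{not} independent of the past, so one cannot naively treat the returns to $[1,k)$ as a classical renewal process. I expect this is resolved exactly as in Lemmas \ref{le:inf_opt_reg}--\ref{lem:getting close to one}: one uses the \emph{optional} regeneration positions, which are determined by the environment to the right and whose density is governed by the ergodic theorem for $\theta$ on $\nu$, rather than genuine renewals. The technical care lies in verifying that, conditioned on $S_+$, the displacements between successive optional regeneration positions have finite mean (ergodic theorem) and that the walk's occupation of $[1,k)$ per block is a.s.\ finite (each block is a finite excursion of a fixed environment), so that Cesàro averaging kills it. I would expect the author's proof to instead phrase this more slickly --- perhaps by directly invoking Birkhoff on an auxiliary shift-invariant functional measuring "time with $Z^+ < k$" --- but the substance is the same: positivity of the regeneration density forces $Z^+$ off of every bounded set with full density.
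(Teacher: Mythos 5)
Your proposal takes a genuinely different route from the paper, but it has concrete gaps and does not close.

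The paper's proof does not use renewal or regeneration at all. It sets $A_{\gamma,x}=\{\prod_{i=1}^k(1-\cookieenv(x,i))>\gamma\}$, notes that by stationarity and ellipticity this event has $\mu$-probability $g(\gamma)\to 1$ as $\gamma\to 0$, and hence by ergodicity $\{x:A_{\gamma,x}\}$ has density $g(\gamma)$. Choosing $\gamma$ so that $g(\gamma)+\tfrac1r>1$, on the event $B_r=\{\dens(\{n:Z_n^+<k\})>\tfrac1r\}$ there are infinitely many $n$ with both $Z_n^+\le k$ and $A_{\gamma,n+1}$. At each such $n$ the conditional probability, given $\mathcal F_n=\sigma(\cookieenv,a(0,\cdot),\dots,a(n-1,\cdot))$, that the chain dies at the next step is at least $\gamma$, so the bounded martingale $M_n=\PP[S_+^c\,|\,\mathcal F_n]$ satisfies $M_n\ge\gamma$ infinitely often; since $M_n\to\mathbf 1_{S_+^c}$ a.s., this forces $S_+^c$. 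Hence $\PP[B_r\,|\,S_+]=0$ for every $r$. The engine is thus martingale convergence plus density of favourable cookie columns, not Birkhoff along regenerations.

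Your sketch has a specific false step. You assert that ``on $S_+$ \dots every $m$ is an optional regeneration position'' because $Z_m^+=W_m\ge 1$. That is not what $W_m\ge 1$ says: $W_m\ge 1$ only says that the walk from $0$ crosses $\{m-1,m\}$ at least once before hitting $-1$, whereas an optional regeneration position requires $T_{m-1}=\infty$ for the walk started \emph{at $m$ with the environment at $m,m+1,\dots$ unconsumed}. A walk that crosses $\{m-1,m\}$, dips back below $m$, and later escapes to $+\infty$ gives $Z_m^+\ge1$ on $S_+$ but $m$ is not an optional regeneration position. Once this identification fails, the renewal skeleton you build on it is unsupported. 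Beyond that, even granting positive density of optional regeneration positions and finite mean gaps, you never show that the time spent in $[1,k)$ per block, divided by the block length, tends to zero; finite-per-block occupation is compatible with positive overall density. You yourself flag the lack of independence of environments across blocks as ``the hard part,'' and indeed that is precisely the obstruction that the paper's martingale argument is designed to bypass: it never needs to compare distinct blocks, only to observe that a small value of $Z^+$ at a favourable cookie column pushes $M_n$ above $\gamma$.
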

    \begin{proof}
    Fix $k>0$. For $\gamma>0$ and $x\in\Z$ let $A_{\gamma,x}$ be the event that $\prod_{i=1}^k(1-\cookieenv(x,i))>\gamma$. By stationarity $g(\gamma):=\mu[A_{\gamma,x}]$ is independent of $x$. By ellipticity, $\lim_{\gamma\to 0}g(\gamma)=1$. By ergodicity, the set $A_{\gamma}=\{x: A_{\gamma,x}\}$ has density $g(\gamma)$.  Let $r$ be a natural number, let $B_r$ be the event that $\dens(\{n:Z^+_n<k\})>\frac{1}{r}$ and let $\gamma$ be small enough so that $g(\gamma)+\frac{1}{r}>1$. Then, on $B_r$
    there are infinitely many $n$ such that both events $Z_n^+\le k$ and $A_{\gamma,n+1}$ occur.
       Set $\mathcal{F}_n=\sigma \{\omega, a(0,\cdot),...,a(n-1,\cdot)\}$ be the $\sigma$-algebra generated by the all the cookies and the first $n$ piles of arrows to the right of and including $0$ and let $M_n=\PP\big[S_+^c\ |\ \mathcal{F}_n \big]$ where $S_+^c$ is the complement of $S_+$. Then $(M_n)_{n\ge 1}$ is a bounded martingale with respect to the filtration $(\mathcal{F}_n)_{n\ge 1}$ converging $\PP$-a.s.\ to $\one_{S_+^c}$. Now, the occurrence of $B_r$ implies that there are infinitely many $n$ for which both events $Z_n^+\le k$ and $A_{\gamma,n+1}$ occur, and therefore there are infinitely many $n$ with $M_n \ge \gamma$. In particular, on $B_r$, $\one_{S_+^c}>\gamma$, implying that $S_+^c$ occurs, so $\PP\big[B_r\ |\ S_+\big]=0$. Since $r$ was chosen arbitrarily, we are done.
    \end{proof}

    \begin{lem}\label{lem:LargeProbPlusGetsAnyHeight}
    Let $\epsilon>0$ and let $y$ be from Lemma \ref{lem:getting close to one}. For every $k\ge 0$ there is some $l$ so that $\PP^y[Z_l^+>k]>1-2\epsilon$.
    \end{lem}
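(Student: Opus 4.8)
The plan is to combine Lemma~\ref{lem:getting close to one}, which supplies the initial value $y$ (necessarily $y\ge 1$) with $\PP^y[S_+]>1-\epsilon$, with the density statement of Lemma~\ref{lem:upperdens0}, through a Ces\`{a}ro averaging argument. The subtlety to keep in mind is that the events $\{Z^+_l>k\}$ are \emph{not} monotone in $l$: the chain $Z^+$ may climb above $k$ and later fall back, so one cannot simply let $l\to\infty$. What Lemma~\ref{lem:upperdens0} does provide is that, conditionally on $S_+$, the chain $Z^+$ lies above any fixed level for asymptotically all times, and averaging over $l$ is how I will turn this into a statement about a single deterministic time.

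Fix $k\ge 0$ and write $G_n=\{Z^+_n>k\}$. Applying Lemma~\ref{lem:upperdens0} with the level $k+1$ in place of $k$, we get that $\PP^y$-a.s.\ on the event $S_+$ the set $\{n:Z^+_n\le k\}$ has upper density $0$; since a nonnegative $\limsup$ equal to $0$ is in fact a limit, this gives $\frac1N\sum_{n=0}^{N-1}\one_{G_n}\to 1$ $\PP^y$-a.s.\ on $S_+$. On the complement $S_+^c$ the chain $Z^+$ reaches $0$ at some finite time and, since $U^+_b(0)=0$, stays at $0$ thereafter, so $\one_{G_n}=0$ for all large $n$ and $\frac1N\sum_{n=0}^{N-1}\one_{G_n}\to 0$ $\PP^y$-a.s.\ on $S_+^c$. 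Hence $\frac1N\sum_{n=0}^{N-1}\one_{G_n}\to\one_{S_+}$ $\PP^y$-a.s.

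Next I would invoke bounded convergence to obtain $\frac1N\sum_{n=0}^{N-1}\PP^y[G_n]\to\PP^y[S_+]>1-\epsilon$, so there is an $N$ for which $\frac1N\sum_{n=0}^{N-1}\PP^y[G_n]>1-\epsilon$. Since each summand lies in $[0,1]$, some index $l\in\{0,\dots,N-1\}$ must satisfy $\PP^y[Z^+_l>k]=\PP^y[G_l]>1-\epsilon>1-2\epsilon$, which is the assertion of the lemma (in fact with $\epsilon$ in place of $2\epsilon$).

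There is no real obstacle here; the only point requiring care is precisely the non-monotonicity of the events $\{Z^+_l>k\}$, which forces the argument to go through Ces\`{a}ro averages and hence through Lemma~\ref{lem:upperdens0} rather than merely through ``$Z^+$ eventually exceeds $k$''. Identifying the a.s.\ limit of these averages as $\one_{S_+}$ uses the ``$\limsup=0\Rightarrow\lim=0$'' observation on $S_+$ together with the trivial absorption of $Z^+$ at $0$ on $S_+^c$; the bounded convergence step and the final pigeonhole extraction of a good $l$ are routine.
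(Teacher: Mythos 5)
Your proof is correct and rests on the same key input as the paper's, namely Lemma \ref{lem:upperdens0}, but the structure differs. The paper argues by contradiction: it assumes $\PP^y[Z_l^+>k\mid S_+]\le 1-\epsilon$ for all $l$, applies Markov's inequality to $\#\{l<n:Z_l^+>k\}$, and deduces that with positive conditional probability on $S_+$ the set $\{l:Z_l^+\le k\}$ has positive upper density, contradicting Lemma \ref{lem:upperdens0}. You argue directly: you identify the $\PP^y$-a.s.\ limit of the Ces\`aro averages $\frac1N\sum_{n<N}\one_{\{Z_n^+>k\}}$ as $\one_{S_+}$ --- handling $S_+^c$ via the absorption of $Z^+$ at $0$, which indeed follows from $U^+_b(0)=0$ --- then pass to expectations by bounded convergence and extract a good $l$ by pigeonhole. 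Your direct version avoids conditioning and the associated bookkeeping, and yields the slightly stronger conclusion with $1-\epsilon$ in place of $1-2\epsilon$. Two details you got right and that are worth noting: Lemma \ref{lem:upperdens0} is stated with strict inequality $Z_n^+<k$, so applying it at level $k+1$ to control $\{n:Z_n^+\le k\}$ is the correct move; and upper density zero of a set, being $\limsup$ of a nonnegative sequence, does imply the corresponding Ces\`aro averages of the complement converge to $1$, which is the step needed before invoking bounded convergence.
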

    \begin{proof}
    Assume not, then by Lemma \ref{lem:getting close to one} there are $k$ and $\epsilon$ so that $\PP^y\big[Z_l^+>k\ |\ S_+\big]\le 1-\epsilon=:\lambda<1$ for all $l$.
    By linearity of expectation, $\E^y\big[\#\{l<n:Z^+_l>k\}\ |\ S_+\big]\le n \lambda$. Fix some $\lambda<\delta<1$, then by the Markov inequality we have $$\PP^y\big[\frac{\#\{l<n:Z^+_l>k\}}{n}>\delta\ |\ S_+ \big]\le \frac{\lambda}{\delta}.$$
    In other words,
    $$\PP^y\big[\frac{\#\{l<n:Z^+_l\le k\}}{n}\ge 1-\delta\ |\ S_+ \big]\ge 1- \frac{\lambda}{\delta}=:\alpha>0. $$
    But therefore
    $$\PP^y\big[\text{ there are infinitely many $n$ such that }\frac{\#\{l<n:Z^+_l\le k\}}{n}\ge 1-\delta\ | \ S_+ \big]\ge \alpha,$$
    contradicting Lemma \ref{lem:upperdens0}.
    \end{proof}

    By translation invariance of the probability measure $\mu$ we get from the Subduality Lemma \ref{lem:subduality} the corollary below.
    Denote by $\PP^k_{r}[Z_l^-\le y]$, $r\in\Z$, the probability that on the $r$-shifted arrow environment $\theta^r a$, the process $Z^-$ with initial value $k$ satisfies $Z_l^-\le y$.
    \begin{cor}\label{cor:ShiftInvarianceOfTimeTrick}
    For every $k\in\N$, $r_1,r_2\in\Z$ and $\epsilon>0$ there is some $l\in\N$ so that $\PP^k_{r_1}[Z_l^-\le y] \ge \PP^y_{r_2}[Z_l^+>k]\ge 1-2\epsilon$, where $y$ is as in Lemma \ref{lem:getting close to one}.
    \end{cor}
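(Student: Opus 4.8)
The plan is to feed the output of Lemma \ref{lem:LargeProbPlusGetsAnyHeight} into the Subduality Lemma \ref{lem:subduality}, and then to erase all the shifts using the $\theta$-invariance of the arrow environment measure $\nu$ from Lemma \ref{lem:arrow_ergod}.

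First I would apply Lemma \ref{lem:LargeProbPlusGetsAnyHeight} to the given $k$ and $\epsilon$, obtaining an index $l$ with $\PP^y[Z^+_l > k] > 1 - 2\epsilon$, where $y$ is as in Lemma \ref{lem:getting close to one}. Since $Z^+$ and $Z^-$ are measurable functions of the arrow environment alone, and since by stationarity of $\nu$ the law of the finite block of columns that a process reads during $l$ steps is unaffected by shifting the environment, one gets $\PP^y_{r_2}[Z^+_l > k] = \PP^y[Z^+_l > k] > 1-2\epsilon$ for every $r_2$. This is the right-hand inequality of the corollary.

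For the middle inequality, fix $l$ as above and argue at the level of arrow environments. On any (non-degenerate) $a$, the event $\{Z^+_l > k\}$ for $Z^+$ started at $y$ entails $Z^+_l \ge k$, so Lemma \ref{lem:subduality} (applied with $x = y$ and target height $k$) shows that on $\theta^{l-1}a$ the process $Z^-$ started at $k$ satisfies $Z^-_l \le y$. Thus the event on arrow environments defining $\PP^k_{l-1}[Z^-_l \le y]$ contains the one defining $\PP^y_0[Z^+_l > k]$, so $\PP^k_{l-1}[Z^-_l \le y] \ge \PP^y_0[Z^+_l > k]$. Invoking stationarity of $\nu$ once more — now on the block of columns read by the $Z^-$ process — gives that $\PP^k_{r_1}[Z^-_l \le y]$ is independent of $r_1$, and combining the three bounds yields $\PP^k_{r_1}[Z^-_l \le y] \ge \PP^y_{r_2}[Z^+_l > k] \ge 1-2\epsilon$.

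The whole argument is bookkeeping; the only mildly delicate point is to notice that the shift $\theta^{l-1}$ produced by subduality can be absorbed into $r_1$ precisely because the two sides of the inequality are allowed to carry independent shifts $r_1$ and $r_2$. I foresee no real obstacle.
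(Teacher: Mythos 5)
Your proof is correct and follows exactly the same route as the paper: take $l$ from Lemma \ref{lem:LargeProbPlusGetsAnyHeight}, get the right inequality from stationarity of $\nu$, and get the left inequality by applying the Subduality Lemma \ref{lem:subduality} pointwise on arrow environments (with $x=y$, $y=k$ in the lemma's notation) and then absorbing the $\theta^{l-1}$ shift via stationarity. The paper's proof is just a terser statement of the same argument.
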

    \begin{proof}
    Fix $k\in\N$ and $\epsilon>0$ and let $l$ be the one guaranteed in Lemma \ref{lem:LargeProbPlusGetsAnyHeight}. $r_1,r_2\in\Z$, then the right inequality follows from stationarity of $\mu$, and the left inequality follows from the Subduality Lemma \ref{lem:subduality} and stationarity of $\mu$.
    \end{proof}

    \begin{lem}\label{lem:a lot of chances to die}
    For every $\epsilon>0$ there are $n_1<n_2<\ldots$ so that $\PP^1[Z^-_{n_i} > y]<3\epsilon$, where $y$ is as in Lemma \ref{lem:getting close to one}.
    \end{lem}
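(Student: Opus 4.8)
The statement asserts that $Z^-$, started from $1$, returns infinitely often to the region $\{\le y\}$ (with probability bounded below at the right times; precisely, $\PP^1[Z^-_{n_i}>y]<3\epsilon$ for a suitable increasing sequence $n_i$). The natural strategy is to combine the subduality mechanism of Corollary \ref{cor:ShiftInvarianceOfTimeTrick} with an iteration/concatenation over successive blocks of the environment. The point is that Corollary \ref{cor:ShiftInvarianceOfTimeTrick} tells us: for every $k$, every shift $r$, and every $\epsilon>0$, there is a length $l=l(k,\epsilon)$ such that on the $r$-shifted environment $Z^-$ started from $k$ drops to $\le y$ after $l$ steps with probability $\ge 1-2\epsilon$. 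Since this works for arbitrary $k$ and arbitrary shift, we can chain these estimates: start $Z^-$ from $1$; run it some number of steps; whatever height $k_1$ it has reached, we invoke the corollary on the appropriately shifted environment with a length $l_1=l(k_1,\epsilon)$ that brings it back down to $\le y$ with probability $\ge 1-2\epsilon$; then repeat.

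\textbf{Key steps in order.} First, I would set $n_0=0$ and $Z^-_0=1$. Since $Z^-$ is built from $U^-$ maps applied columnwise, after any fixed number of steps $Z^-$ is at some a.s.-finite height; but to make the chaining work uniformly I need to truncate. So the first real step is: for the given $\epsilon$, choose $K$ large enough that $\PP^1[Z^-_{m} > K]<\epsilon$ for a suitable first time $m=n_1$ — actually, more carefully, I want to define $n_1$ and $K_1$ together so that with probability $\ge 1-\epsilon$ we have $Z^-_{n_1}\le K_1$. Second step: apply Corollary \ref{cor:ShiftInvarianceOfTimeTrick} with $k=K_1$, with the shift $r$ equal to $n_1$ (the columns already consumed), and with the given $\epsilon$, to obtain a length $l_1$ so that, conditionally on $Z^-_{n_1}\le K_1$, the probability that $Z^-_{n_1+l_1}\le y$ is $\ge 1-2\epsilon$ (using monotonicity of $U^-$ in the starting value, Observation \ref{obs:propsOfU}\eqref{itm:UNondec}, to pass from ``$=K_1$'' to ``$\le K_1$''). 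Set $n_2 = n_1+l_1$. Then $\PP^1[Z^-_{n_2}>y] \le \PP^1[Z^-_{n_1}>K_1] + 2\epsilon < 3\epsilon$. Third step: iterate — from time $n_2$, again $Z^-$ is at some finite height; repeat the truncation-then-subduality argument on the further-shifted environment to produce $n_3>n_2$ with $\PP^1[Z^-_{n_3}>y]<3\epsilon$, and so on, generating the infinite sequence $n_1<n_2<\cdots$.

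\textbf{Main obstacle.} The delicate point is the uniformity and the use of the Markov-like structure: $Z^-$ is \emph{not} in general a Markov chain (only in the i.i.d.\ case), so ``conditionally on $Z^-_{n_1}\le K_1$, restart from there'' needs care. The way around this is that Corollary \ref{cor:ShiftInvarianceOfTimeTrick}, and behind it the Subduality Lemma \ref{lem:subduality}, are \emph{deterministic} statements about the arrow environment coupled with the $U^+/U^-$ dynamics: the bound $\PP^{K_1}_{r}[Z^-_l\le y]\ge 1-2\epsilon$ holds for \emph{any} shift $r$ and uses only stationarity of $\mu$, so I can apply it to the event ``$Z^-_{n_1}\le K_1$'' by conditioning on $\mathcal F_{n_1}$ and then, on that event, bounding below the probability (with respect to the remaining randomness) that the process run on $\theta^{n_1}a$ from height $\le K_1$ comes down to $\le y$ within $l_1$ steps; monotonicity in the initial height makes the worst case the one started exactly at $K_1$. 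Care is also needed that the height $Z^-_{n_1}$ is genuinely a.s.\ finite — this follows since $a\in A$ a.s.\ (non-degeneracy, which we may assume as in the running assumptions), so each $U^-_{a(\cdot)}$ map sends a finite value to a finite value. Once these measurability/uniformity issues are handled, the construction of the $n_i$ is a routine induction and the union bound delivers the $3\epsilon$ estimate.
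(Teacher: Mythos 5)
Your proposal is correct and takes essentially the same route as the paper: fix an auxiliary time and a cutoff $k_i$ with $\PP^1[Z^-_{m_i}>k_i]<\epsilon$ (possible since $Z^-_{m_i}$ is a.s.\ finite, by non-degeneracy), invoke Corollary~\ref{cor:ShiftInvarianceOfTimeTrick} at shift $r_1=m_i$ to get $l_i$ with $\PP^{k_i}_{m_i}[Z^-_{l_i}\le y]\ge 1-2\epsilon$, set $n_i=m_i+l_i$, and conclude via monotonicity of $U^-$ in the initial value (Observation~\ref{obs:propsOfU}\eqref{itm:UNondec}) together with a union bound over the two events, all coupled on one arrow environment; this is exactly the paper's inductive construction. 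One small caution on the framing in your final paragraph: the \emph{unconditional} bound from Corollary~\ref{cor:ShiftInvarianceOfTimeTrick} does not automatically pass to a bound conditional on $\mathcal F_{n_1}$ (which contains $\omega$ and hence biases the remaining arrows), but this is immaterial since the inequality you actually write, $\PP^1[Z^-_{n_2}>y]\le\PP^1[Z^-_{n_1}>K_1]+2\epsilon<3\epsilon$, follows directly from the pointwise comparison plus a union bound and needs no conditioning at all --- which is precisely what the paper does.
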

    \begin{proof}
    Fix $m_1=0$. There is $k_1$ so that $\PP^1[Z^-_{m_1}>k_1]<\epsilon$. Let $l_1$ be the $l$ guaranteed by Corollary \ref{cor:ShiftInvarianceOfTimeTrick} for $k=k_1$.
    Define $n_1=m_1+l_1$, then by Corollary \ref{cor:ShiftInvarianceOfTimeTrick} $\PP^1[Z^-_{n_1}<y]\ge\PP^1[Z^-_{m_1}\le k_1,Z^-_{n_1}<y]\ge 1-3\epsilon$.
    Let $m_2>n_1$. There is $k_2$ so that $\PP^1[Z^-_{m_2}>k]<\epsilon$ Let $l_2$ be the $l$ guaranteed by Corollary \ref{cor:ShiftInvarianceOfTimeTrick} for $k=k_1$ and $r_1=m_2$. Define $n_2=m_2+l_2$, then $\PP^1[Z^-_{n_2}<y]\ge\PP^1[Z^-_{m_2}\le k,Z^-_{n_2}<y]\ge 1-3\epsilon$.
    Assume that $n_1<...<n_r$ were chosen so that $\PP^1[Z^-_{n_i}<y]\ge 1-3\epsilon$ for all $1\le i\le r$.
    At the $(r+1)$-st step, fix $m_{r+1}>n_r$. There is $k_{r+1}$ so that $\PP^1[Z^{-}_{m_{r+1}} > k_{r+1}] <\epsilon$. Let $l_{r+1}$ be the $l$ guaranteed by Lemma \ref{lem:LargeProbPlusGetsAnyHeight} for $k=k_{r+1}$ and $r_1=m_{r+1}$. Define $n_{r+1}=m_{r+1}+l_{r+1}$, then \[\PP^1[Z^{-}_{n_{r+1}}<y]\ge\PP^1[Z^{-}_{m_{r+1}}\le k_{r+1},Z^-_{n_{r+1}}<y]\ge 1-3\epsilon.\]
    \end{proof}

    \begin{proof}[Proof of Proposition \ref{prop:main}]
    Assume that $\PP^1\big[S_+\big]>0$. Let $\delta>0$. We will show that $\PP^1\big[Z^-_{n} > 0 \text{ for all } n\big]\le\delta$. Let $\epsilon=\frac{\delta}{4}>0$.
    By Lemma \ref{lem:a lot of chances to die}, there are $n_1<n_2<\ldots$ so that $\PP^1\big[Z^-_{n_i} \le y\big]\ge 1-3\epsilon$, where $y$ is as in Lemma \ref{lem:getting close to one}. As in the proof of Lemma \ref{lem:upperdens0}, set
    \[
    A_{\gamma,x}=\left\{\prod_{i=1}^y \cookieenv(x,i)>\gamma\right\} \mbox{ for }\gamma>0\mbox{ and }x\in\Z.
    \]
    By stationarity $g(\gamma):=\mu[A_{\gamma,x}]$ is independent of $x$. By ellipticity, $\lim_{\gamma\to 0}g(\gamma)=1$.
    Let $\gamma>0$ be small enough so that $g(\gamma)>1-\epsilon$. Then $\PP^1[Z^-_{n_i} \le y,A_{\gamma,-n_i}]\ge 1-4\epsilon=1-\delta$ for all $i\ge 1$.
    Let $D$ be the event  that there are infinitely many $i$ such that $Z^-_{n_i} \le y \text{ and }A_{\gamma,-n_i}$. Then $\PP^1[D]\ge 1-\delta$.

    Set $\mathcal{F}_n=\sigma \{ \cookieenv, a(0,\cdot),...,a(-(n-1) \}$ be the $\sigma$-algebra generated by all the cookies and the first $n$ piles of arrows to the left of and including $0$ and let $M_n=\PP\big[S_+^c\ |\ \mathcal{F}_n \big]$. Then $(M_n)_{nge 1}$ is a bounded martingale with respect to the filtration $(\mathcal{F}_n)_{n\ge 1}$ converging a.s.\ to $\one_{S_-^c}$, and so if the event $D$ occurs, then also does $S_-^c$.
    Therefore
    \[
    \PP^1[Z^-_{n} > 0 \text{ for all } n]\le 1-\PP^1[D]\leq\delta.
    \]
    Since $\delta$ was arbitrary, we are done.
    \end{proof}

\section*{Acknowledgments}
We thank Itai~Benjamini, Xiaoqin~Guo, Gady~Kozma, Igor~Shinkar and Ofer~Zeitouni for useful discussions. We also thank Jonathon Peterson
for reading an earlier version of the manuscript. We thank the anonymous referees for valuable comments including the open problem regarding ellipticity that helped us to improve the mathematical results and the style of the paper.
The research of N.B. and T.O. was partially supported by ERC StG grant 239990. The research of G.A. was supported by Israeli Science Foundation grant ISF 1471/11.

    \bibliography{ProbabilityBib}{}
    \bibliographystyle{plain}

\begin{appendix}
    \section{Proof sketch of Lemma \ref{lem:arrow_ergod}}
    \begin{proof}[\nopunct]
    The function $F$ is a measurable function from the product space $\Omega\times U$ with the shift $\theta\times\theta$ to the space $\mathbf{A}$ with the shift $\theta$, so that the measure on the latter is obtained from the former by $F$. It is straightforward to verify that $\mu\times \PP'$ is stationary with respect to $\theta\times\theta$.
    Note that
    \begin{equation}\label{eq:ShiftComutesWithF}
    F^{-1}[\theta A]= (\theta\times\theta) F^{-1}[A] \text{ for every }A\subset \mathbf{A}.
    \end{equation}
    Hence the stationarity of $\nu$ follows from the stationarity of $\mu\times\PP'$.

    For the proof of ergodicity, first note that it is enough to show that $( \Omega\times U , \mu \times \PP', \mathcal{B}_{\Omega} \times \mathcal{B}_{U},\theta\times\theta)$ is ergodic. Indeed, using \eqref{eq:ShiftComutesWithF} the inverse image under $F$ of each $\theta$-invariant set is $(\theta\times\theta$)-invariant, and so it must be of either $\PP\times P$-measure $0$ or $1$.
    To prove the ergodicity of $\mu\times \PP'$, let $f:\Omega\times U\to [0,1]$ be a $\mu\times \PP'$-measurable function. We shall show that $f$ is a constant function.
    Denote by $E$ the expectation operator with respect to $\mu\times \PP'$.
    First note that $\varphi:=E(f|\mathcal{B}_\Omega)$ is a $\theta$ invariant function on $\Omega$ and so by ergodicity it is $\mu$-a.s. constant in $[0,1]$.

    Let $f_n=E[f|\mathcal{B}_\Omega \times \sigma \left( u(-n,\cdot),...,u(n,\cdot)\right)]$. Then $E[|f-f_n|]\to 0$ as $n\to \infty$, where \\ $\sigma \left( u(-n,\cdot),...,u(n,\cdot) \right)\subset\mathcal{B}_U$ is the minimal sub $\sigma$-algebra containing the $\Z$-coordinates $-n,...,n$.
    Let $\epsilon>0$ and let $n_0$ be large enough so that for all $n\ge n_0$ $E[|f-f_n]<\epsilon$.
    Let $\tilde{f_n}=(\theta\times\theta)^{3n}f_n$ be the $3n$ steps left shift of $f_n$.
    Note that, since $\PP'$ is the product measure,
   $f_n$ and $\tilde f_n$ are independent conditioned on $\mathcal{B}_\Omega$. Therefore
   \begin{equation}\label{eq:tal1}
   E(f_n\tilde f_n | \mathcal{B}_\Omega) = E(\tilde f_n | \mathcal{B}_\Omega)E(f_n| \mathcal{B}_\Omega).
   \end{equation}
    Note also that
   \begin{equation}\label{eq:tal2}
   E[|f-\tilde{f_n}|] =  E\left[\big|(\theta\times\theta)^{-3n}f-(\theta\times\theta)^{-3n}\tilde{f_n}\big|\right] = E[|f-f_n|]<\epsilon.
   \end{equation}
    Write $\varphi_n=E(\tilde f_n | \mathcal{B}_\Omega),$ and $\tilde{\varphi_n}=E(f_n| \mathcal{B}_\Omega).$ By \eqref{eq:tal2} and the triangle inequality, $E[|\varphi-\varphi_n|]<\epsilon$ and $E[|\varphi-\tilde{\varphi_n}|]<\epsilon$. Therefore
   \begin{eqnarray*}
    E[f_n\tilde{f_n}]
    &=&E\big[ E[f_n\tilde f_n | \mathcal{B}_\Omega] \big]
    \mathop{=}^{\eqref{eq:tal1}} E[\varphi_n\tilde{\varphi_n}]=
    E[ (\varphi + \varphi_n - \varphi)( \varphi + \tilde{\varphi_n} -\varphi )]\\
    &=&\varphi^2 + \varphi E[(\varphi_n - \varphi)] +  \varphi E[(\tilde{\varphi_n}-\varphi)] + E[(\tilde{\varphi_n}-\varphi)(\tilde{\varphi_n}-\varphi )].
    \end{eqnarray*}
    (We used the fact that $\varphi$ is an a.s.\ constant and write it (notation abused) as a number.)
    Using the fact that all functions are bounded from above by $1$, their difference is bounded from above by $2$ and we have
    \begin{eqnarray*}
    | E[f_n\tilde{f_n}-\varphi^2] | \le E[|\varphi_n - \varphi|]+ E[|\tilde{\varphi_n}-\varphi|] +2 E[|(\tilde{\varphi_n}-\varphi )|] < 4\epsilon.
    \end{eqnarray*}
    As $E[f_n\tilde{f_n}]\to E[f^2]$ as $n\to\infty$, taking $n$ to infinity and then $\epsilon$ to zero
    yields $E[f^2]=E[\varphi^2]=E[\varphi]^2=E[f]^2$. Therefore $\mbox{var}(f)=0$ and $f$ is a $\mu\times\PP'$-a.s.\ constant.
    \end{proof}

\end{appendix}

\end{document}